\newcommand{\mul}{\cdot}
\newcommand{\U}{\mathbf{1}}
\newcommand{\Z}{\mathbf{0}}
\newcommand{\INC}{\text{\sc inc}}
\newcommand{\JZDEC}{\text{\sc jzdec}}
\newcommand{\ra}{\mathsf{a}}
\newcommand{\rb}{\mathsf{b}}
\newcommand{\rc}{\mathsf{c}}
\newcommand{\Mf}{\mathcal{M}}
\newcommand{\Nf}{\mathcal{N}}
\newcommand{\NN}{\mathbb{N}}
\newcommand{\imp}{\multimap}
\newcommand{\pmi}{\mathop{\mbox{\rotatebox{180}{\raisebox{-5pt}{$\multimap$}}}}}
\newcommand{\gn}[1]{\ulcorner {#1} \urcorner}
\newcommand{\Ac}{\mathcal{A}}
\newcommand{\Hc}{\mathcal{H}}
\newcommand{\Kc}{\mathcal{K}}
\newcommand{\Cc}{\mathcal{C}}
\newcommand{\CommACT}{\mathbf{CommACT}}
\newcommand{\CommACTomega}{\CommACT_\omega}
\newcommand{\ACT}{\mathbf{ACT}}
\newcommand{\ACTomega}{\ACT_\omega}
\newcommand{\CommACTinfty}{\CommACT_\infty}
\newcommand{\Var}{\mathrm{Var}}
\newcommand{\yields}{\vdash}
\newtheorem{theorem}{Theorem}
\newtheorem{prop}{Proposition}
\newtheorem{lemma}{Lemma}
\theoremstyle{definition}
\newtheorem*{definition}{Definition}
\theoremstyle{remark}
\newtheorem{example}{Example}
\begin{document}
 
 \title{Commutative Action Logic}
 \author{Stepan L. Kuznetsov}
 \affil{Steklov Mathematical Institute of RAS}
 
 \maketitle 
 
 \begin{abstract}
  We prove undecidability and pinpoint the place in the arithmetical hierarchy for commutative action logic, that is, the equational theory of commutative residuated Kleene lattices (action lattices), and infinitary commutative action logic, the equational theory of *-continuous action lattices. Namely, we prove that the former is $\Sigma_1^0$-complete and the latter is $\Pi_1^0$-complete. Thus, the situation is the same as in the more well-studied non-commutative case. The methods used, however, are different: we encode infinite and circular computations of counter (Minsky) machines.
 \end{abstract}

 \section{Action Lattices and Their Theories}
 
 The concept of {\em action lattice,} introduced by Pratt~\cite{Pratt1991} and Kozen~\cite{Kozen1994LIF}, combines several algebraic structures: a partially ordered monoid with
residuals (``multiplicative structure''), a lattice (``additive structure'') sharing the same partial order, and Kleene star. (Pratt introduced the notion of action algebra, which bears only a semi-lattice structure with join, but not meet. Action lattices are due to Kozen.)

\begin{definition}
 An action lattice is a structure 
 $\langle \Ac; \preceq, \mul, \Z, \U, \imp, \pmi, \vee, \wedge, {}^* \rangle$, where:
 \begin{enumerate}
  \item $\preceq$ is a partial order on $\Ac$;
  \item $\Z$ is the smallest element for $\preceq$, that is, $\Z \preceq a$ for any $a \in \Ac$;
  \item $\langle \Ac; \mul, \U \rangle$ is a monoid;
  \item $\imp$ and $\pmi$ are residuals of the product ($\cdot$) w.r.t.\ $\preceq$, that is:
  $$
b \preceq a \imp c \iff a \mul b \preceq c \iff 
a \preceq c \pmi b;
  $$
  \item $\langle \Ac; \preceq, \vee, \wedge \rangle$ is a lattice;
  \item for each $a \in \Ac$, $a^* = \min_{\preceq} \{ b \mid \U \preceq b \mbox{ and } a \mul b \preceq b \}$.
 \end{enumerate}

\end{definition}

An important subclass of action lattices is formed by 
{\em *-continuous} action lattices.

\begin{definition}
 Action lattice $\Ac$ is *-continuous, if for any $a \in \Ac$ we have $a^* = \sup_{\preceq} \{ a^n \mid n \ge 0 \}$, where $a^n = a \mul \ldots \mul a$ ($n$ times) and
 $a^0 = \U$.
\end{definition}

Interesting examples of action lattices are mostly *-continuous; non-*-con\-ti\-nuous action lattices also exist, but are constructed artificially.

The equational theory for the class of action lattices or its subclass (e.g., the class of *-continuous action lattices) is the set of all statements of the form $A \preceq B$, where $A$ and $B$ are formulae (terms) built from variables and constants $\Z$ and $\U$ using action lattice operations, which are true in any action lattice from the given class under any valuation of variables. More precisely, the previous sentence defines the {\em in}equational theory, but in the presence of lattice operations it is equivalent to the equational one: $A \preceq B$ can be equivalently represented as $A \vee B = B$.

In a different terminology, equational theories of classes of action lattices are seen as algebraic logics. These logics are substructural, extending the multi\-pli\-ca\-ti\-ve-additive (``full'') Lambek calculus~\cite{Lambek1958,Kanazawa}, which is a non-commutative intuitionistic variant of Girard's linear logic~\cite{Girard1987}.

The equational theory of all action lattices is called {\em action logic} and denoted by $\mathbf{ACT}$. For the subclass of *-continuous action lattices, the equational theory is {\em infinitary action logic} $\mathbf{ACT}_\omega$, introduced by Buszkowski and Palka~\cite{Buszkowski2007,Palka2007,BuszkowskiPalka2008}. 

The interest to such a weak language---only (in)equations---is motivated by complexity considerations. Namely, for the next more expressible language, the language of Horn theories, the corresponding theory of the class of *-continuous action lattices is already $\Pi_1^1$-complete~\cite{Kozen2002}, that is, has a non-arithmetical complexity level. In contrast, $\mathbf{ACT}_\omega$ is $\Pi_1^0$-complete, as shown by Buszkowski and Palka~\cite{Buszkowski2007,Palka2007}. For the general case, $\mathbf{ACT}$ is $\Sigma_1^0$-complete~\cite{Kuznetsov2019LICS,Kuznetsov2021TOCL}, which is already the maximal possible complexity: iteration in action lattices in general allows a finite axiomatization, unlike the *-continuous situation, which requires infinitary mechanisms. 

Kleene algebras and their extensions are used in computer science for reasoning about program correctness. In particular, elements of an action lattice are intended to represent {\em types of actions} performed by a computing system (say, transitions in a finite automaton). Multiplication corresponds to composition of actions, Kleene star is iteration (perform an action several times, maybe zero). Residuals represent {\em conditional} types of actions. An action of type $a \imp b$, being preceded by an action of type $a$, gives an action of type $b$. Dually, $b \pmi a$ is the type of actions which require to be followed by an action of type $a$ to achieve $b$.

The monoid operation (multiplication) in action lattices is in general non-commutative, since so is, in general, composition of actions. However, in his original paper Pratt designates the subclass of commutative action algebras:
\begin{quotation}
\noindent
 ``A {\em commutative} action algebra is an action algebra satisfying $ab=ba$. Whereas action logic in general is neutral as to whether $ab$ combines $a$ and $b$ sequentially or concurrently, commutative action logic in effect commits to concurrency''.~\cite{Pratt1991}
\end{quotation}
Later on commutative action algebras (lattices) were not studied systematically. Concurrent computations are usually treated using a more flexible approach, using a specific parallel execution connective, $\parallel$, in the framework of concurrent Kleene algebras, CKA~\cite{HoareCKA2011}, and its extensions. In particular, the author is not aware of a study of equational theories (algebraic logics) for commutative action lattices.

Commutative versions of $\ACT$ and $\ACTomega$ are denoted by $\CommACT$ and $\CommACTomega$ respectively.
In this article, we prove undecidability and pinpoint the position in the arithmetical hierarchy for both $\CommACT$ and $\CommACTomega$. Namely, we prove that:
\begin{enumerate}
 \item $\CommACT$ is $\Sigma_1^0$-complete;
 \item $\CommACTomega$ is $\Pi_1^0$-complete.
\end{enumerate}
The second result was presented at the 3rd DaL{\'\i} Workshop and published in its proceedings~\cite{Kuznetsov2020DaLi}. The first result is new.

The rest of the article is organized as follows. We start with the *-continuous case. In Section~\ref{S:upper} we present an infinitary sequent calculus for $\CommACTomega$, prove cut elimination and the $\Pi_1^0$ upper bound. This construction basically copies Palka's~\cite{Palka2007} reasoning in the non-commutative case, for $\ACTomega$. Commutativity does not add anything significantly new here. 

In contrast, for proving $\Pi_1^0$-hardness (lower bound), which is performed in Section~\ref{S:ACTomega}, we could not have used Buszkowski's argument~\cite{Buszkowski2007}, since it uses a reduction from the totality problem for context-free grammars, which is instrinsically non-commutative. Instead, we use an encoding of 3-counter Minsky machines, which are commutative-friendly. The encoding of Minsky instructions and configurations is taken from the work of Lincoln et al.~\cite{LMSS}, with minor modifications. The principal difference from~\cite{LMSS}, however, is the usage of Kleene star to model {\em non-halting} behaviour of Minsky machines (while Lincoln et al. use the exponential modality of linear logic for modelling halting computations). 

In Section~\ref{S:ACT} we prove $\Sigma_1^0$-completeness for $\CommACT$ by encoding circular behaviour of Minsky machines and the technique of effective inseparability (Myhill's theorem). This argument is even more straightforward than the one from~\cite{Kuznetsov2019LICS,Kuznetsov2021TOCL}, since we do not need intermediate context-free grammars.

Section~\ref{S:conclusion} concludes the article by showing directions of further research in the area.

\section{Proof Theory and Upper Bounds}\label{S:upper}

\subsection{Sequent Calculi and Cut Elimination}

We present an infinitary sequent calculus for $\CommACTomega$, which is a commutative version of Palka's system for $\mathbf{ACT}_\omega$. Formulae of $\CommACTomega$ are built from a countable set of variables $\Var = \{ p,q,r, \ldots\}$ and constants $\Z$ and $\U$ using four binary connectives, $\imp$, $\mul$, $\vee$, and $\wedge$, and one unary connective, $^*$. (Due to commutativity, $B \pmi A$ is always equivalent to $A \imp B$, so we have only one residual here.) Sequents are expressions of the form $\Gamma \yields A$, where $\Gamma$ is a multiset of formulae (that is, the number of occurrences matters, while the order does not) and $A$ is a formula. In our notations, capital Greek letters denote multisets of formulae and capital Latin letters denote formulae.

Axioms and inference rules of $\CommACTomega$ are as follows:
$$
\infer[Id]{A \yields A}{}
\qquad
\infer[\Z L]{\Gamma, \Z \yields C}{}
\qquad
\infer[\U L]{\Gamma, \U \yields C}{\Gamma \yields C}
\qquad
\infer[\U R]{\yields \U}{}
$$
$$
\infer[\imp L]{\Gamma, \Pi, A \imp B \yields C}
{\Pi \yields A & \Gamma, B \yields C}
\qquad
\infer[\imp R]{\Pi \yields A \imp B}
{A, \Pi \yields B}
$$
$$
\infer[\mul L]{\Gamma, A \mul B \yields C}
{\Gamma, A, B \yields C} \qquad 
\infer[\mul R]{\Pi, \Delta \yields A \mul B}
{\Pi \yields A & \Delta \yields B}
$$
$$
\infer[\vee L]{\Gamma, A \vee B \yields C}
{\Gamma, A \yields C & \Gamma, B \yields C}
\qquad 
\infer[\vee R]{\Pi \yields A \vee B}
{\Pi \yields A}
\qquad 
\infer[\vee R]{\Pi \yields A \vee B}
{\Pi \yields B}
$$
$$
\infer[\wedge L]{\Gamma, A \wedge B \yields C}
{\Gamma, A \yields C}
\qquad
\infer[\wedge L]{\Gamma, A \wedge B \yields C}
{\Gamma, B \yields C}
\qquad
\infer[\wedge R]{\Pi \yields A \wedge B}
{\Pi \yields A & \Pi \yields B}
$$
$$
\infer[*L_\omega]{\Gamma, A^* \yields C}
{\bigl( \Gamma, A^n  \yields C \bigr)_{n=0}^{\infty}}
\qquad
\infer[*R_n,\ n \ge 0]{\Pi_1, \ldots, \Pi_n \yields A^*}
{\Pi_1 \yields A & \ldots & \Pi_n \yields A}
$$
$$
\infer[Cut]{\Gamma, \Pi \yields C}
{\Pi \yields A & \Gamma, A \yields C}
$$
The set of derivable sequents (theorems) is the smallest set which includes all instances of axioms and which is closed under inference rules. Thus, derivation trees in $\CommACTomega$ may have infinite branching (at instances of $*L$, which is an $\omega$-rule), but are required to be well-founded (infinite paths are forbidden).

Let us formulate several properties of $\CommACTomega$ and give proof sketches, following Palka~\cite{Palka2007}, but in the commutative setting. The proofs are essentially the same as Palka's ones; we give their sketches here in order to make this article logically self-contained.

The sequents of $\CommACTomega$ presented above enjoy a natural algebraic interpretation on commutative action lattices. Namely, given an action lattice $\Ac$, we intepret variables as arbitrary elements of $\Ac$, by a valuation function $v \colon \Var \to \Ac$, and then propagate this interpretation to formulae. Let us denote the interpretation of formula $A$ under valuation $v$ by $\bar{v}(A)$. A sequent of the form $A_1, \ldots, A_n \yields B$ ($n \ge 1$) is true under this interpretation if $\bar{v}(A_1) \mul \ldots \mul \bar{v}(A_n) \preceq \bar{v}(B)$ (due to commutativity of $\mul$, the order of $A_i$'s does not matter). For $n=0$, the sequent $\yields B$ is declared true if $\U \preceq \bar{v}(B)$. 
A soundness-and-completeness theorem holds:
\begin{theorem}
 A sequent is derivable in $\CommACTomega$ if and only if it is true in all commutative *-continuous action lattices under all valuations of variables.
\end{theorem}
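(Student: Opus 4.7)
The plan is the standard Lindenbaum--Tarski completeness argument adapted to the infinitary $*L_\omega$ rule, paired with a direct soundness check that invokes *-continuity.

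For soundness, I would proceed by induction on the derivation. The identity, zero, and unit axioms, together with the residual, product, and lattice rules, are handled by routine appeals to the defining equations of action lattices; commutativity of $\mul$ absorbs the fact that antecedents are multisets rather than lists, so no explicit exchange rule is needed. The Cut rule is sound by transitivity of $\preceq$ combined with monotonicity of $\mul$. The distinctive case is $*L_\omega$: assuming $\bar{v}(\Gamma) \mul \bar{v}(A)^n \preceq \bar{v}(C)$ for every $n \ge 0$, *-continuity gives $\bar{v}(A)^* = \sup_n \bar{v}(A)^n$, and since residuals exist the map $x \mapsto \bar{v}(\Gamma) \mul x$ preserves all existing suprema, so $\bar{v}(\Gamma) \mul \bar{v}(A)^* \preceq \bar{v}(C)$. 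The rules $*R_n$ are immediate from $\bar{v}(A)^n \preceq \bar{v}(A)^*$.

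For completeness, I would build the Lindenbaum--Tarski algebra. Set $A \equiv B$ when both $A \yields B$ and $B \yields A$ are derivable; this is an equivalence by $Id$ and Cut. Let $\Ac_0$ be the set of equivalence classes, ordered by derivability, with operations induced from the syntactic connectives; well-definedness on classes is where cut admissibility pays off. The monoid, lattice, residual, and unit axioms follow directly from the corresponding two-sided sequent rules, and commutativity of $\mul$ on $\Ac_0$ is inherited from the multiset semantics. For the star, one shows that $[A^*]$ is the minimum of $\{[B] \mid \U \preceq B \mbox{ and } A \mul B \preceq B\}$: if $\U \preceq B$ and $A \mul B \preceq B$ are derivable, then by a straightforward induction $A^n \preceq B$ is derivable for every $n$, and $*L_\omega$ packages these into $A^* \preceq B$. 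For *-continuity itself, $[A^*] = \sup_n [A^n]$ follows similarly: $*R_n$ provides the upper-bound direction, and $*L_\omega$ provides the least-upper-bound direction. Under the canonical valuation $v(p) = [p]$, a sequent $A_1, \ldots, A_n \yields B$ holds in $\Ac_0$ iff $[A_1] \mul \cdots \mul [A_n] \preceq [B]$, which unfolds to its derivability; hence every sequent valid in all commutative *-continuous action lattices is derivable.

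The main obstacle I expect is *-continuity of $\Ac_0$ --- this is exactly where the $\omega$-rule is indispensable, since the least-upper-bound clause must be delivered as a provable inequality in the calculus from infinitely many premises. This is also why the present subsection is coupled with cut elimination: well-definedness of the operations on equivalence classes, and the inductive verification that $A^n \preceq B$ is derivable whenever $\U \preceq B$ and $A \mul B \preceq B$ are, both rely on Cut being admissible (or at least preserved) throughout the construction.
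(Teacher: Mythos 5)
Your proposal is correct and follows essentially the same route as the paper, which proves soundness by (transfinite) induction on derivations and completeness by the standard Lindenbaum--Tarski construction; your elaboration of the $*L_\omega$ case via *-continuity and preservation of suprema by residuated multiplication is exactly the intended argument. One small remark: since $Cut$ is an official rule of $\CommACTomega$, well-definedness of operations on equivalence classes needs only the rule itself, not its admissibility (cut elimination is established separately and is not required here).
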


\begin{proof}
 The ``only if'' part (soundness) is proved by (transfinite) induction on the structure of derivation. For the ``if'' part (completeness), we use the standard Lindenbaum -- Tarski canonical model construction.
\end{proof}

Thus, $\CommACTomega$ is indeed an axiomatization for the equational theory of commutative *-continuous action lattices.

In order to facilitate induction on derivation in the infinitary setting, we define the {\em depth} of a derivable sequent in the following way. For an ordinal $\alpha$, let us define the set $S_{\alpha}$ by transfinite recursion:
\begin{align*}
 & S_0 = \varnothing; \\
 & S_{\alpha + 1} = \{ \Gamma \yields A \mid \mbox{$\Gamma \yields A$ is derivable by one rule application from $S_{\alpha}$ } \}; \\
 & S_{\lambda} = \bigcup_{\alpha < \lambda} S_{\alpha} \mbox{ for $\lambda \in \mathrm{Lim}$.}
\end{align*}
(In particular, $S_1$ is the set of all axioms of $\CommACTomega$.)
For a derivable sequent $\Gamma \yields A$ let  $d(\Gamma \yields A) = \min \{ \alpha \mid (\Gamma \yields A) \in S_{\alpha} \}$ be its depth.

The complexity of a formula $A$ is defined as the total number of subformula occurrences in it.

\begin{theorem}
 The calculus $\CommACTomega$ enjoys cut elimination, that is, any derivable sequent can be derived without using Cut.
\end{theorem}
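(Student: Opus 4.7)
The plan is to adapt Palka's cut elimination argument \cite{Palka2007} for $\ACTomega$ to the commutative setting, with the main adjustment being that contexts are now multisets rather than lists. Because the $\omega$-rule $*L_\omega$ has infinitely many premises, derivations can have transfinite depth $d(\cdot)$ as defined above, so a transfinite induction is needed.

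The proof proceeds by a primary induction on the complexity of the cut formula $A$ and a secondary transfinite induction on $d(\Pi \yields A) + d(\Gamma, A \yields C)$ (ordinal addition), showing that any topmost cut can be replaced by cuts of strictly smaller measure. Given a cut
$$
\infer[Cut]{\Gamma, \Pi \yields C}{\Pi \yields A & \Gamma, A \yields C}
$$
I distinguish three families of cases. First, if either premise is an axiom ($Id$, $\Z L$, or $\U R$), the cut is removed directly. Second, if $A$ is a side formula in the last rule applied in either premise, I permute the cut upward: the cut is pushed into each premise of that rule, producing cuts with the same cut formula $A$ but strictly smaller depth on one side, and then the rule is reapplied below. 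For the $\omega$-rule, this means applying cut to all countably many premises simultaneously — legitimate because each resulting cut has strictly smaller measure and the conclusion of $*L_\omega$ can be reassembled. Third, if $A$ is principal on both sides, I perform the standard reductions into cuts on the immediate subformulae of $A$, which are strictly simpler and thus handled by the primary induction hypothesis.

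The characteristic case is $A = B^*$ principal on both sides: the right premise is derived by $*L_\omega$ from $(\Gamma, B^m \yields C)_{m=0}^\infty$, and the left premise is derived by $*R_n$ from $\Pi_1 \yields B, \ldots, \Pi_n \yields B$ for some specific $n$. I select the $n$-th premise $\Gamma, B^n \yields C$ of $*L_\omega$ and perform $n$ successive cuts against $\Pi_1 \yields B, \ldots, \Pi_n \yields B$ on the strictly simpler formula $B$, yielding $\Gamma, \Pi_1, \ldots, \Pi_n \yields C$, where multiset concatenation replaces the list concatenation used in Palka's non-commutative original — so commutativity makes this step notationally lighter rather than harder.

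The main obstacle will be the bookkeeping for the transfinite secondary induction when a non-principal cut is permuted through $*L_\omega$: one must verify that each of the infinitely many resulting cuts has a strictly smaller ordinal measure, and that the family of derivations obtained by the induction hypothesis is indeed uniformly defined so that $*L_\omega$ may be reapplied. Once this is established, the remaining cases — the multiplicatives $\imp, \mul$ and the additives $\vee, \wedge$ — follow by routine computations essentially identical to Palka's, and the theorem follows.
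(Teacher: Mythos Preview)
Your overall strategy---Palka's argument adapted to multiset contexts, with a primary induction on cut-formula complexity and a secondary transfinite induction handling the $\omega$-rule---matches the paper's approach. However, there is a genuine gap in your choice of secondary induction parameter.

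You propose to induct on the ordinary ordinal sum $d(\Pi \yields A) + d(\Gamma, A \yields C)$. Ordinal addition is strictly monotone in the \emph{right} argument, so permuting the cut upward through the right premise works: if $\beta' < \beta$ then $\alpha + \beta' < \alpha + \beta$. But ordinal addition is \emph{not} strictly monotone in the left argument: for instance $1 + \omega = 2 + \omega = \omega$. So when $A$ is a side formula in the last rule of $\Pi \yields A$ (say $\vee L$, $\wedge L$, $\mul L$, $\imp L$, or crucially $*L_\omega$ acting on some formula in $\Pi$), you must cut each premise of that rule against the unchanged right premise $\Gamma, A \yields C$. The left depth strictly drops, but if $d(\Gamma, A \yields C)$ is infinite the sum need not drop at all. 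Concretely, with $d(\Pi \yields A)=2$ and $d(\Gamma,A\yields C)=\omega$, permuting through the left yields cuts of measure $1+\omega=\omega=2+\omega$, and your induction hypothesis does not apply.

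The fix is exactly what the paper does: replace the sum by the lexicographic triple $(\text{complexity of }A,\ d(\Pi\yields A),\ d(\Gamma,A\yields C))$, following Palka's Theorem~3.1. Permuting through the left decreases the second component; permuting through the right preserves the first two and decreases the third; principal reductions decrease the first. Alternatively, the natural (Hessenberg) sum $d(\Pi\yields A)\oplus d(\Gamma,A\yields C)$, which is strictly monotone in both arguments, would also work. With either correction your argument goes through; the remaining case analysis you sketch (including the $B^*$ principal case) is correct and essentially identical to the paper's.
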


\begin{proof}
 First we eliminate one cut on the bottom of a derivation, that is, show that if $\Pi \yields A$ and $\Gamma, A \yields C$ are cut-free derivable, then so is $\Gamma, \Pi \yields C$. This is established by triple induction on the following parameters: (1) complexity of $A$; (2) depth of $\Pi \yields A$; (3) depth of $\Gamma, A \yields C$. See~\cite[Theorem~3.1]{Palka2007} for details.
 
 Next, let a sequent $\Gamma \yields B$ be derivable using cuts. Let $d(\Gamma \yields B)$ be its depth, counted for the calculus with $Cut$ as an official rule. Let us show that $\Gamma \yields B$ is cut-free derivable by induction on $\alpha = d(\Gamma \yields B)$. Notice that $\alpha$ is not a limit ordinal: otherwise, $(\Gamma \yields B) \in S_\beta$ for some $\beta < \alpha$. Also $\alpha \ne 0$. Thus, $\alpha = \beta + 1$. The sequent $\Gamma \yields B$ is immediately derivable, by one rule application, from a set of sequents from $S_{\beta}$, that is, of smaller depth. By the induction hypothesis, these sequents are cut-free derivable. Now consider the rule which was used to derive $\Gamma \yields B$. If it is not $Cut$, then $\Gamma \yields B$ is also cut-free derivable. If it is $Cut$, we apply the reasoning from the beginning of this proof and establish cut-free derivability of $\Gamma \yields B$. 
\end{proof}

The situation with $\CommACT$, the algebraic logic of all commutative action lattices, is different. This logic can be axiomatized, in the presence of $Cut$, by the following two axioms and an inductive rule for iteration:
\[
 \infer[*R_0]
 {\yields A^*}{}
 \qquad
 \infer[*R_{\mathrm{ind}}]
 {A, A^* \yields A^*}{}
 \qquad
 \infer[*L_{\mathrm{ind}}]
 {A^* \yields B}
 {\yields B & A, B \yields B}
\]
and the same rules for other connectives, as in $\CommACTomega$. This axiomatization of Kleene star exactly corresponds to its definition as $a^* = \min_\preceq \{ b \mid \U \preceq b \mbox { and } a \mul b \preceq b \}$. Thus,  soundness and completeness are established  by a standard Lindenbaum -- Tarski argument:
\begin{theorem}
 A sequent is derivable in $\CommACT$ if and only if it is true in all commutative action lattices under all valuations of variables.
\end{theorem}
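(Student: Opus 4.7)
The plan is to prove soundness by straightforward induction on derivations, and completeness by the Lindenbaum--Tarski canonical model construction, exactly as indicated in the paper. Since the calculus for $\CommACT$ is finitary, ordinary induction (not transfinite) suffices.

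For soundness, I would proceed by induction on the length of a $\CommACT$-derivation. For each axiom (e.g.\ $Id$, $\Z L$, $\U R$, $*R_0$, $*R_{\mathrm{ind}}$) the truth of the corresponding (in)equation in every commutative action lattice is immediate from the defining clauses of the structure. For each rule, one checks that from the truth of the premises the truth of the conclusion follows, using associativity and commutativity of $\mul$, the residuation law, and the lattice identities. The only non-routine cases are the star rules. The axiom $*R_{\mathrm{ind}}$ reads $a \mul a^* \preceq a^*$, which holds because $a^*$ lies in the set $\{b \mid \U \preceq b \text{ and } a\mul b \preceq b\}$. The rule $*L_{\mathrm{ind}}$ says that if $\U \preceq \bar v(B)$ and $\bar v(A)\mul \bar v(B) \preceq \bar v(B)$ then $\bar v(A^*) \preceq \bar v(B)$, which is precisely the minimality clause in the definition of $a^*$.

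For completeness, I would build the term algebra quotiented by interderivability. Let $A \equiv B$ mean that both $A \yields B$ and $B \yields A$ are derivable in $\CommACT$; using $Cut$ and the $Id$-axiom this is easily shown to be a congruence with respect to all connectives, where for the binary connectives one uses the standard left/right introduction rules, and for $^*$ one uses the induction rule together with $*R_0,*R_{\mathrm{ind}}$ to show that $A \equiv A'$ implies $A^* \equiv (A')^*$. On the quotient $\Ac_0$ of formulae by $\equiv$, define $[A] \preceq [B]$ iff $A \yields B$ is derivable, and lift the connectives componentwise. The ordering is well-defined and the operations satisfy the action-lattice axioms: monoid laws (with commutativity), the lattice laws, the residuation biconditional (from $\imp L$, $\imp R$, $Cut$), and, crucially, the star clause. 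For the latter, $\U \preceq [A^*]$ follows from $*R_0$ and $[A]\mul [A^*] \preceq [A^*]$ from $*R_{\mathrm{ind}}$, so $[A^*]$ lies in the set; minimality is exactly $*L_{\mathrm{ind}}$. Finally, take the canonical valuation $v(p) = [p]$; an easy induction on formulae shows $\bar v(A) = [A]$, so any sequent valid in all commutative action lattices is in particular valid in $\Ac_0$ under $v$, hence derivable.

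The main obstacle, and the only place where anything non-trivial happens, is verifying the Kleene star clause in the quotient algebra: one has to check that $[A^*]$ really is the \emph{minimum} of $\{[B] \mid \U \preceq [B] \text{ and } [A]\mul[B] \preceq [B]\}$, and this requires $*L_{\mathrm{ind}}$ together with the fact that the premise $A, B \yields B$ of that rule can be extracted from the assumption $[A]\mul [B] \preceq [B]$ via $\mul L$ and $Cut$. Everything else is bookkeeping entirely analogous to the Lindenbaum--Tarski construction for the non-commutative action logic $\ACT$.
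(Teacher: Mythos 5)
Your proposal is correct and follows exactly the route the paper intends: the paper gives no detailed proof, merely observing that the star axioms $*R_0$, $*R_{\mathrm{ind}}$, $*L_{\mathrm{ind}}$ correspond precisely to the defining clause $a^* = \min_\preceq \{ b \mid \U \preceq b \text{ and } a \mul b \preceq b\}$ and invoking the standard Lindenbaum--Tarski argument, which is what you carry out. Your elaboration of the congruence property for $^*$ and the verification of minimality in the quotient algebra fills in exactly the details the paper leaves implicit.
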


This calculus for $\CommACT$, however, does not enjoy cut elimination, and there is no known cut-free formulation of $\CommACT$.

\subsection{The $\Pi_1^0$ Upper Bound for $\CommACTomega$}

For $\CommACT$, there is a trivial $\Sigma_1^0$ upper bound: any logic axiomatized by a calculus with finite proofs is recursively enumerable. In Section~\ref{S:ACT} we show that this complexity bound is exact, i.e., $\CommACT$ is $\Sigma_1^0$-complete.

For $\CommACTomega$, the situation is different. In general, such a calculus with an $\omega$-rule can be even $\Pi^1_1$-complete~\cite{KuznetsovSperanski2020}. In the non-commutative case, however, the complexity is much lower: $\ACTomega$ belongs to the $\Pi_1^0$ complexity class~\cite{Palka2007}. We show that  for $\CommACTomega$ the situation is the same.

In order to prove that $\CommACTomega$ belongs to the $\Pi_1^0$ complexity class, we use Palka's *-elimination technique. For each sequent, we define its {\em $n$-th approximation}. Informally, we replace each negative occurrence of $A^*$ with $A^{\le n} = \U \vee A \vee A^2 \vee \ldots \vee A^n$. The $n$-th approximation of a sequent $A_1, \ldots, A_m \yields B$ is defined as $N_n(A_1), \ldots, N_n(A_m) \yields P_n(B)$, where mappings $N_n$ and $P_n$ are defined by joint recursion:
\begin{align*}
 & N_n(\alpha) = P_n(\alpha) = \alpha,\ \alpha \in \Var \cup \{ \Z, \U \} && \\ 
 & N_n(A \imp B) = P_n(A) \imp N_n(B) && P_n(A \imp B) = N_n(A) \imp P_n(B) \\
 & N_n(A \mul B) = N_n(A) \mul N_n(B) && P_n(A \mul B) = P_n(A) \mul P_n(B) \\
 & N_n(A \vee B) = N_n(A) \vee N_n(B) && P_n(A \vee B) = P_n(A) \vee P_n(B) \\
 & N_n(A \wedge B) = N_n(A) \wedge N_n(B) && P_n(A \wedge B) = P_n(A) \wedge P_n(B) \\
 & \omit\rlap{$N_n(A^*) = \U \vee N_n(A) \vee (N_n(A))^2 \vee \ldots
 \vee (N_n(A))^n$} \\ & && P_n(A^*) = (P_n(A))^*
\end{align*}
(In Palka's notation, $N$ and $P$ are inverted.)

The *-elimination theorem, resembling Palka's~\cite{Palka2007} Theorem~5.1, is now formulated as follows:
\begin{theorem}\label{Th:starelim}
 A sequent is derivable in $\CommACTomega$ if and only if its $n$-th approximation is derivable in $\CommACTomega$ for any $n$.
\end{theorem}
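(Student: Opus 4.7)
The plan is a two-stage argument adapted to the commutative setting from Palka's~\cite{Palka2007} Theorem~5.1. The forward direction reduces to two auxiliary derivability lemmas, while the backward direction carries the main technical content.

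For the forward direction (soundness of the approximation), I would first prove by mutual induction on formula complexity that the sequents $N_n(A)\yields A$ and $A\yields P_n(A)$ are derivable in $\CommACTomega$ for every formula $A$ and every $n$. The variable and constant cases are instances of Id. Each binary connective is dispatched by its matching left- and right-introduction rules combined with the IH; for example, $N_n(A)\mul N_n(B)\yields A\mul B$ follows from $\mul R$ and the two IH instances, while $A\imp B\yields N_n(A)\imp P_n(B)$ follows from $\imp R$ and $\imp L$ applied to the IH. The $^*$-cases are the interesting ones: $N_n(A^*)=\U\vee N_n(A)\vee\ldots\vee(N_n(A))^n\yields A^*$ is obtained by iterated $\vee L$ (reducing to $(N_n(A))^k\yields A^*$ for each $k\le n$) followed by $*R_k$ using IH $N_n(A)\yields A$ on each premise; dually, $A^*\yields(P_n(A))^*$ is obtained by $*L_\omega$ (one premise per $k\ge 0$) followed by $*R_k$ on IH $A\yields P_n(A)$. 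With these families in hand, a derivation of $\Gamma\yields C$ is transformed into one of $N_n(\Gamma)\yields P_n(C)$ by one Cut per antecedent formula plus one on $C$, which is legitimate since cut admissibility was just established.

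For the backward direction (completeness of the approximation), assume every $n$-th approximation of $\Gamma\yields C$ is derivable, and construct a derivation of $\Gamma\yields C$ itself. I would proceed by induction on an appropriate complexity measure on $\Gamma\yields C$, analysing the outermost negative occurrence of $^*$. If no such occurrence exists anywhere, $N_n$ and $P_n$ act as the identity on the formulas of the sequent, so the hypothesis coincides with the conclusion. Otherwise, cut-free invertibility of $\imp R$ and $\wedge R$ lets one assume the distinguished $^*$ lies in the antecedent, say $\Gamma=\Gamma',A^*$. The next step is to apply $*L_\omega$, whose premises $\Gamma',A^k\yields C$ each have strictly smaller measure; by IH it suffices to verify that each approximation of $\Gamma',A^k\yields C$ is derivable. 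For $n\ge k$, the approximation $N_n(\Gamma'),(N_n(A))^k\yields P_n(C)$ is read off from the given $N_n(\Gamma'),\U\vee N_n(A)\vee\ldots\vee(N_n(A))^n\yields P_n(C)$ using cut-free invertibility of $\vee L$ and $\U L$. For $n<k$, one rescales to some $n'\ge k$ using a monotonicity lemma $N_n(A)\yields N_{n'}(A)$ and $P_n(A)\yields P_{n'}(A)$ (for $n\le n'$), proved by the same mutual-induction recipe as the forward lemmas.

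The main obstacle is the choice of complexity measure and the bookkeeping around it. Because $A$ itself may contain $^*$, unfolding $A^*$ into $k$ copies of $A$ can inflate any naive count of $^*$-occurrences, so the measure must be weighted (by $^*$-depth or by an ordinal bound in the spirit of Palka's argument) in such a way that replacing $A^*$ by $A^k$ strictly decreases it regardless of $k$. The remaining subtleties are the cut-free invertibility lemmas for $\vee L$, $\U L$, $\imp R$, and $\wedge R$ in the commutative calculus; these mirror the non-commutative arguments but have to be re-checked. Once these ingredients are in place, the resulting derivation tree is well-founded (each $\omega$-branch strictly simplifies the sequent) and yields a genuine $\CommACTomega$-proof of $\Gamma\yields C$.
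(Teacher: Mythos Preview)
Your forward direction is correct and matches the paper. The gap is in the backward direction: the claim that invertibility of $\imp R$ and $\wedge R$ lets you assume $\Gamma = \Gamma', A^*$ is false. A negative occurrence of $^*$ can sit where those inversions cannot reach it---for instance inside an antecedent formula $B \imp A^*$ (where $\imp L$ is not invertible), or in the succedent beneath a non-invertible right rule, as in $(A^* \imp B) \vee D$ or $(A^* \imp B) \mul D$. In such positions your induction has no step to take: you cannot apply $*L_\omega$, and you give no way to decompose any other connective while preserving the hypothesis ``all approximations are derivable.''

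The paper closes this gap by arguing contrapositively. Assuming $\Pi \yields B$ is underivable, one exhibits a single $n$ with underivable $n$-th approximation, by induction on the rank of $\Pi \yields B$. If some antecedent formula is literally $A^*$, one premise $\Pi', A^m \yields B$ of $*L_\omega$ is underivable and the induction hypothesis applies (this is essentially your case). Otherwise $*L_\omega$ is unavailable, but then \emph{every} remaining rule is finitary with only finitely many possible instantiations; since $\Pi \yields B$ is underivable, each instantiation has an underivable premise of strictly smaller rank, and one takes the maximum of the $n$'s supplied by the induction hypothesis. The point is that the rank (the sequence of subformula counts indexed by complexity, compared anti-lexicographically) decreases under \emph{every} rule, so the induction can step through arbitrary connectives, not just $^*$. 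Your outline handles only the $^*$-unfolding case and omits this second branch entirely. (Minor: your monotonicity lemma for $P$ is stated in the wrong direction; for $n \le n'$ one has $P_{n'}(A) \yields P_n(A)$.)
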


\begin{proof}
 The ``only if'' part is easier. We establish by induction that $A \yields P_n(A)$ and $N_n(A) \yields A$ are derivable for any $A$: see~\cite[Lemma~4.3]{Palka2007} for $\mathbf{ACT}_\omega$; commutativity does not alter this part of the proof. Next, we apply $Cut$ several times:
 $$
 \infer{N_n(A_1), \ldots, N_n(A_m) \yields P_n(B)}
 {N_n(A_1) \yields A_1 & \ldots & N_n(A_m) \yields A_m
 & A_1, \ldots, A_m \yields B & B \yields P_n(B)}
 $$
 
 For the ``if'' part, a specific induction parameter is introduced. This parameter is called the {\em rank} of a formula and is represented by a sequence of natural numbers. These sequences are formally infinite, but include only zeroes starting from some point. For a sequent $\Gamma \yields A$ its rank $\rho(\Gamma \yields A)$ is the sequence $(c_0, c_1, c_2, \ldots)$, where $c_i$ is the number of subformulae of complexity $i$ in $\Gamma \yields A$.
 
 The order on ranks is anti-lexicographical: $(c_0, c_1, c_2, \ldots) \prec (c'_0, c'_1, c'_2, \ldots)$, if there exists a natural number $i$ such that $c_i < c'_i$ and for any $j>i$ we have $c_j = c'_j$. 
 In any rank $(c_0, c_1, c_2, \ldots)$ of a sequent there exists such a $k_0$ that $c_k = 0$ for all $k > k_0$ ($k_0$ is the maximal complexity of a subformula in $\Gamma \yields A$). Hence, any two ranks are comparable. Moreover, the order on ranks is well-founded. Thus, we can perform induction on ranks.
 
 The rules of $\CommACTomega$ (excluding $Cut$) enjoy the following property: each premise has a smaller rank than the conclusion. In particular, this holds for $*L$: despite $A$ is copied $n$ times, its complexity is smaller, than that of $A^*$. Thus, when going from conclusion to premise, we reduce some $c_i$ by one and increase $c_{i-1}$ (where $i$ is the complexity of $A^*$) and also some $c_j$'s with smaller indices. The rank gets reduced.
 
 Now we prove the ``if'' part of our theorem by contraposition. Suppose a sequent $\Pi \yields B$ is not derivable in $\CommACTomega$. We shall prove that for some $n$ the $n$-th approximation of this sequent is also not derivable. We proceed by induction on $\rho(\Pi \yields B)$. Consider two cases. 
 
 {\em Case 1:} one of the formulae in $\Pi$ is of the form $A^*$. Then $\Pi = \Pi', A^*$ and for some $m$ the sequent $\Pi', A^m \yields B$ is not derivable (otherwise $\Pi \yields B$ would be derivable by $*L$). 
 Since $\rho(\Pi', A^m \yields B) \prec \rho(\Pi', A^* \yields B)$, we can apply the induction hypothesis and conclude that for some $k$ the sequent $N_k(\Pi'), 
 (N_k(A))^m \yields P_k(B)$ is not derivable. Here $N_k(\Pi')$, for $\Pi' = C_1, \ldots, C_s$, is defined as $N_k(C_1)$, $\ldots$, $N_k(C_s)$.
 
 Now take $n = \max \{ m,k \}$. We claim that $N_n(\Pi'), N_n(A^*) \yields P_n(B)$ is not derivable. This is indeed the case, because otherwise we could derive the sequent $N_k(\Pi'), 
 (N_k(A))^m \yields P_k(B)$ using cut. The sequents used in cut are $N_k(C_j) \yields N_n(C_j)$, for each $C_j$ in $\Pi'$,  
 $(N_k(A))^m \yields N_n(A)^*$, and 
 $P_n(B) \yields P_k(B)$, which are derivable 
 (see~\cite[Lemma~4.4]{Palka2007}).

 {\em Case 2:} no formula of $\Pi$ is of the form $A^*$. Thus, our sequent cannot be derived using (immediately) the $*L$ rule. All other rules are finitary, and there is only a finite number of possible applications of these rules (for example, for $\imp L$ there is a finite number of possible splittings of the context to $\Gamma$ and $\Pi$). For each of these possible rule applications, at least one of its premises should be non-derivable (otherwise we derive the original sequent $\Pi \yields B$).
 
 The premises have smaller ranks than $\Pi \yields B$, so we can apply the induction hypothesis. This gives, for each premise, non-derivability of its $k$-th approximation for some $k$. Let $n$ be the maximum of these $k$'s. Increasing $k$ keeps each approximation non-derivable, and we get non-derivability of the $n$-th approximation of the original sequent. 
\end{proof}

The *-elimination technique yields the upper complexity bound:

\begin{theorem}
 The derivability problem in $\CommACTomega$ belongs to the $\Pi_1^0$ complexity class.
\end{theorem}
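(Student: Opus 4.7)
The plan is to combine Theorem~\ref{Th:starelim} (*-elimination) with decidability of derivability for each approximation. By Theorem~\ref{Th:starelim}, a sequent $\Pi \yields B$ is $\CommACTomega$-derivable if and only if for every $n \in \NN$, its $n$-th approximation $N_n(\Pi) \yields P_n(B)$ is derivable. Thus, once I establish that derivability of a single approximation is decidable, the overall $\CommACTomega$-derivability problem becomes a universal $\NN$-quantification over a decidable predicate, placing it in $\Pi_1^0$.

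The core step is to show decidability of derivability of a single approximation. The transformations $N_n$ and $P_n$ eliminate all $*$'s from negative positions, leaving only positive occurrences (inside $P_n(B)$) intact. I would argue that cut-free bottom-up proof search on the approximated sequent preserves the invariant: the antecedent is star-free, and $*$'s occur only in positive positions of the succedent. Indeed, left rules decompose star-free antecedent formulas into star-free residues; right rules decompose the succedent while respecting the polarity of each subformula, so $*$'s produced by $*R_n$ or $\imp R$ remain on the right side of any subgoal. Consequently, the infinitary rule $*L_\omega$---which would require $A^*$ in the antecedent---never applies. Combined with the cut-elimination theorem proved above, this reduces derivability to a search among proofs whose every rule is finitary.

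It then remains to bound cut-free proof search. At each sequent, finitely many rule instances apply: the multiplicative rules ($\imp L$, $\mul R$) and $*R_n$ require choosing a split of the antecedent multiset, and in $*R_n$ the parameter $n$ can be bounded by $|\Pi|+1$, since larger values merely force empty components and hence redundant copies of the premise $\yields A$. Each premise is strictly smaller than its conclusion under the multiset ordering on the complexities of formulas occurring in the sequent; the critical case is $*R_n$, which replaces the single occurrence of $A^*$ by several occurrences of its strictly smaller subformula $A$, while distributing the antecedent. Since the multiset order is well-founded, cut-free search terminates, yielding decidability of each approximation. The main obstacle I anticipate is precisely this bound on $*R_n$: its apparently unbounded parameter $n$ must be tamed via the observation that extraneous empty splits produce only duplicated premises, so finitely many effectively distinct rule instances remain.
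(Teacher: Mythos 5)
Your proposal follows the paper's proof exactly: reduce via Theorem~\ref{Th:starelim} to derivability of all $n$-th approximations, observe that an approximation has no negative occurrences of ${}^*$ so $*L_\omega$ never fires and exhaustive cut-free proof search (with all remaining rules finitary and complexity-reducing) decides it, and read off $\Pi_1^0$ from the outer universal quantifier. One small imprecision: the antecedent of an approximation is \emph{not} star-free in general (positive occurrences of ${}^*$ survive inside $N_n$, e.g.\ in left arguments of implications occurring in the antecedent), but the weaker invariant you also state --- absence of \emph{negative} occurrences of ${}^*$, which is preserved by all rules since they respect polarity --- is exactly what blocks $*L_\omega$, so the argument goes through.
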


\begin{proof}
 By Theorem~\ref{Th:starelim}, derivability of a sequent is reduced to derivability of all its $n$-th approximations. Each $n$-th approximation, in its turn, is a sequent without negative occurrences of $^*$, that is, its derivation in $\CommACTomega$ is always finite (does not use $*L$). For such sequents, the derivability problem is decidable by exhausting proof search, since all rules, except $*L$, reduce the complexity of the sequent (when looking upwards). The ``$\forall n$'' quantifier yields $\Pi_1^0$. 
\end{proof}

 \section{$\Pi_1^0$-Hardness of $\CommACTomega$}\label{S:ACTomega}

  \subsection{Counter (Minsky) Machines}
 In our undecidability proofs, we encode counter machines, or Minsky machines~\cite{Minsky}, since in the commutative setting it is impossible to maintain order of letters and thus to encode Turing machines, semi-Thue systems, etc. 
 
  Let us recall some basics.
 A counter machine operates several {\em counters,} or registers, whose values are natural numbers. The machine itself is, at each point of operation, in a {\em state} $q$ taken from a finite set $Q$. Instructions of a counter machine are of the following forms, where $r$ is a register and $p, q, q_0, q_1$ are states:
 \begin{center}
 \begin{tabular}{l@{\quad}|@{\quad}l}
  $\INC(p, r, q)$ & being in state $p$, increase register $r$ by 1 \\ & and move to state $q$;\\[5pt]
  $\JZDEC(p, r, q_0, q_1)$ & being in state $p$, check
  whether the value of $r$ is 0: \\ &
  if yes, move to state $q_0$, \\ &
  if no, decrease $r$ by 1 and move to state $q_1$.
 \end{tabular}
\end{center}

In what follows, we consider only deterministic counter machines, that is, for each state $p$ there exists no more than one instruction with this $p$ as the first parameter. 
Moreover, there is a unique state for which there is no such instruction, and this state is called the final one and denoted by $q_F$. The machine {\em halts} once it reaches $q_F$.

Counter machines are used for computing partial functions on natural numbers. One fixed register, denoted by $\ra$, is used for input/output: the machine starts at the initial state $q_S$ with its input data (a natural number) put into $\ra$; all other registers are assigned to 0. If the machine halts, then the resulting value is located in $\ra$. We may suppose that other registers hold 0; otherwise we can add extra states and instructions to perform ``garbage collection.'' If the machine does not halt (runs forever), the function on the given input is undefined.

A {\em configuration} of a counter machine is a tuple of the form $\langle q, c_1, \ldots, c_n \rangle$, where $q \in Q$, $n$ is the number of registers, and $c_1, \ldots, c_n$ are natural numbers (values of registers). The starting configuration, on input $x$, is $\langle q_S, x, 0, \ldots, 0 \rangle$. 

We restrict ourselves to 3-counter machines, with only three registers: $\ra$, $\rb$, and $\rc$, as three registers are sufficient for Turing completeness. Namely, any computable partial function on natural numbers can be computed on a 3-counter machine as defined above.

An accurate translation from Turing machines to 3-counter ones can be found in Schroeppel's memo~\cite{Schroeppel}. 
Notice that 2-counter machines are also Turing-complete, but in a specific sense: a natural number $n$ should be submitted as an input not as it is, but as $2^n$, and the same for output~\cite{Minsky}; the function $n \mapsto 2^n$ itself is not computable on 2-counter machines~\cite{Schroeppel}. To avoid this inconvenience, we use 3-counter machines.
 
\begin{prop}\label{Pr:Turing}
 A partial function $f \colon \NN \to \NN$ is computable if and only if $f$ is computed by a 3-counter machine.~{\rm\cite{Schroeppel}}
\end{prop}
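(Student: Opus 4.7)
The plan is to prove the two directions separately. The ``if'' direction is routine: a Turing machine can store the three register values in binary on its tape and simulate each $\INC$ or $\JZDEC$ instruction by a bounded amount of tape manipulation, so any function computed by a 3-counter machine is Turing-computable. For the converse I would chain two standard reductions --- first from a Turing machine to a multi-counter machine with $k \ge 4$ registers, and then from $k$ registers down to $3$ --- arranged so that the input/output register $\ra$ is never subjected to an exponential re-encoding.

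For the first step, given a Turing machine $M$, I would encode its tape as two natural numbers written in base $|\Sigma|$, where $\Sigma$ is the tape alphabet of $M$: one number for the portion of the tape to the left of the head, the other for the portion to the right. Moving the head and reading or writing a symbol correspond to multiplying or dividing by $|\Sigma|$ and extracting or replacing the low-order digit; all of these operations decompose into repeated increments, decrements, and zero-tests, and hence can be carried out by a counter machine using the additional registers as scratch. The input $n$ is placed directly into $\ra$ as the initial tape contents, and the output is read back from $\ra$ at termination, with no blow-up.

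For the second step, the $k-1$ auxiliary registers $a_1, \ldots, a_{k-1}$ are packed into a single counter via the G\"odel code $2^{a_1} 3^{a_2} \cdots p_{k-1}^{a_{k-1}}$ stored in $\rb$, while $\rc$ serves as scratch for implementing multiplication and division by the primes $p_i$ (which simulate $\INC$ and $\JZDEC$ on each $a_i$) by repeated addition and subtraction loops. The register $\ra$ is kept entirely outside this packing. The main obstacle --- and the reason three counters are needed rather than two --- is precisely this separation: with only two counters, the packing would have to absorb $\ra$ as well, which forces the input to be supplied as $2^n$ rather than $n$ and so changes the function computed. With a third register available as permanent storage for the actual input and, at the end, the output, the packed simulation can run undisturbed and return $f(n)$ into $\ra$ on halting. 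The detailed bookkeeping is carried out in Schroeppel's memo~\cite{Schroeppel}; for our purposes only the statement is needed.
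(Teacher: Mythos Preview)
Your sketch is a correct outline of the standard argument, and in particular your emphasis on keeping $\ra$ outside the G\"odel packing is exactly the point the paper stresses in the paragraph preceding the proposition. Note, however, that the paper does not actually supply a proof of this proposition: it is stated as a citation to Schroeppel's memo~\cite{Schroeppel} and used as a black box, so there is no ``paper's own proof'' to compare against beyond the informal remarks about why three counters (rather than two) are needed.
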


It will be convenient for us to use the definition of {\em recursively enumerable} (r.e., or $\Sigma_1^0$) sets as domains of computable functions: $D_f = \{ x \mid \mbox{$f(x)$ is defined} \}$ or, in view of Proposition~\ref{Pr:Turing}, 
$D_\Mf = \{ x \mid \mbox{$\Mf$ halts on input $x$} \}$. Among r.e. sets, there exist $\Sigma_1^0$-complete ones. Thus, the general halting problem for 3-counter machines is $\Sigma_1^0$-complete. The dual {\em non-halting} problem is $\Pi_1^0$-complete; moreover, there exists a concrete $\Mf$ such that $\overline{D}_\Mf = \{ x \mid \mbox{$\Mf$ does not halt on $x$}\}$ is $\Pi_1^0$-complete.

\subsection{Encoding Minsky Instructions}

We prove $\Pi_1^0$-hardness of $\CommACTomega$ by reducing the non-halting problem for deterministic 3-counter Minsky machines to derivability in $\CommACTomega$. Our approach is in a sense dual to the undecidability proof for commutative propositional linear logic by Lincoln et al.~\cite{LMSS}. They use the exponential modality, $!A$, which is expanded to $A^n$ for {\em some} $n$, using the contraction rule. The formula $A$ being an encoding of the instruction set of a Minsky machine, this construction represents termination of Minsky computation after $n$ steps. Dually, we use $A^*$, which is expanded using the $\omega$-rule, $*L$, to an infinite series of sequents with $A^n$ for {\em any} $n$. This corresponds to an infinite run of the Minsky machine: it can perform arbitrarily many steps.

Notice that, as in~\cite{LMSS}, we essentially use commutativity. It is needed to deliver the instruction to the correct place in the formula encoding the machine configuration. In the non-commutative setting, this is a separate issue, and Buszkowski's $\Pi_1^0$-hardness proof for $\mathbf{ACT}_\omega$~\cite{Buszkowski2007}  uses an indirect reduction from non-halting of Turing machines, via totality for context-free grammars.

Let $\Mf$ be a deterministic 3-counter machine. In $\CommACTomega$, configurations of $\Mf$ are encoded as follows. Let the set of variables include the set of states $Q$ of $\Mf$, and additionally three variables $\ra$, $\rb$, and $\rc$ for counters. Configuration $\langle q,a,b,c \rangle$ is encoded as follows:
\[
 q, \underbrace{\ra, \ldots, \ra}_{\text{$a$ times}}, 
 \underbrace{\rb, \ldots, \rb}_{\text{$b$ times}}, 
 \underbrace{\rc, \ldots, \rc}_{\text{$c$ times}}.
\]
This encoding will appear in antecedents of $\CommACTomega$ sequents, thus, it is considered as a multiset. This keeps the numbers of $\ra$'s, $\rb$'s, and $\rc$'s, which is crucial for representing Minsky configurations.

Each instruction $I$ of $\Mf$ is encoded by a specific formula $A_I$. For $\INC$, the encoding is straightforward:
$$
A_{\INC(p,r,q)} = p \imp (q \mul r).
$$
For $\JZDEC$, the encoding is more involved. We introduce two extra variables, $z_{\ra}$ and
$z_\rb$, and encode $\JZDEC(p,r,q_0,q_1)$ by the following formula:
$$
A_{\JZDEC(p,r,q_0,q_1)} = 
((p \mul r) \imp q_1) \wedge (p \imp (q_0 \vee
 z_r)).
$$

Moreover, we introduce three extra formulae,
$N_{\ra} = z_{\ra} \imp z_{\ra}$, 
$N_{\rb} = z_{\rb} \imp z_{\rb}$, and
$N_{\rc} = z_{\rc} \imp z_{\rc}$.

Let us explain the informal idea behind this encoding. Suppose that we wish to model $n$ steps of execution. In our derivations, formulae of the form $A_I$ are going to appear in left-hand sides of sequents
(along with the code of the configuration), instantiated using Kleene star (we consider the derivation of the $n$-th premise of $*L$). For $\INC$, when the formula $A_{\INC(p,r,q)}$ gets introduced by $\imp L$, we replace $p$ with $q \mul r$ (looking from bottom to top). This corresponds to changing the state from $p$ to $q$ and increasing register $r$.

For $\JZDEC$, we use additive connectives, $\wedge$ and $\vee$. Being in the negative position (in the left-hand side of the sequent), $\wedge$ implements {\em choice} and $\vee$ implements {\em branching} (parallel computations). In $\JZDEC$, the choice is as follows. If there is at least one copy of variable $r$ (i.e., the value of register $r$ is not zero), we can choose $(p \mul r) \imp q_1$ which changes the state from $p$ to $q_1$ and decreases $r$. We could also choose $p \imp (q_0 \vee z_r)$, for the zero case. This operation continues the main execution thread by changing to state $q_0$, but also forks a new thread with a ``state'' $z_r$. This new thread is designed to check whether $r$ is actually zero. Since the thread was forked in the middle of the execution, say, after $k$ steps, it still has to perform $(n-k)$ steps of execution. They get replaced by dummy instructions, encoded by $N_r = z_r \imp z_r$.

The set of instructions (including ``dummies'') is encoded by the formula
$$
E = N_\ra \wedge N_\rb \wedge N_\rc \wedge \bigwedge_I A_I,
$$
which is going to be copied using Kleene star.

The key feature of our encoding is the right-hand side of the sequent, which is going to be 
$$ 
D = \bigl( \ra^* \mul \rb^* \mul \rc^* \mul \bigvee_{q \in Q} q \bigr) \vee (\rb^* \mul \rc^* \mul z_\ra) \vee (\ra^* \mul \rc^* \mul z_\rb) \vee (\ra^* \mul \rb^* \mul z_\rc).
$$
This formula represents constraints on the configuration after performing $n$ steps of computation. For the main execution thread, it just says that it should reach a correctly encoded configuration of the form $\langle q, a, b,c \rangle$, $q \in Q$, $a,b,c \in \NN$. For zero-checking thread, with ``state'' $z_r$, $D$ enforces the value of register $r$ to be zero.

In the next subsection, we formulate and prove a theorem which establishes a correspondence between Minsky computations and derivations of specific sequents in $\CommACTomega$.

\subsection{Computations and Derivations}

\begin{theorem}\label{Th:omega}
 Minsky machine $\Mf$ runs forever on input $x$ if and only if the sequent \[E^*, q_S, \ra^x \yields D \eqno{(*)}\] is derivable in $\CommACTomega$. Therefore, $\CommACTomega$ is $\Pi_1^0$-hard.
\end{theorem}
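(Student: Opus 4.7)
The plan is to prove both implications of the biconditional and derive $\Pi_1^0$-hardness from the fact that the non-halting problem for some deterministic 3-counter machine is $\Pi_1^0$-complete.

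For the ``runs forever $\Rightarrow$ derivable'' direction, I would apply $*L$ to $E^*$ and reduce $(*)$ to the family $\{ E^n, q_S, \ra^x \yields D \mid n \geq 0 \}$. Fixing $n$, let $\kappa_0 \to \kappa_1 \to \ldots \to \kappa_n$ be the first $n+1$ configurations of $\Mf$ on $x$, which exist because $\Mf$ runs forever. I would simulate this prefix inside the cut-free calculus: at each step, consume one copy of $E$ by $\wedge L$, projecting onto the instruction formula $A_{I_k}$ for the current state $p$, and then apply $\imp L$. An $\INC(p,r,q)$ rewrites $p$ into $q \mul r$; a $\JZDEC(p,r,q_0,q_1)$ on a nonzero register picks the first conjunct $(p \mul r) \imp q_1$ and decrements $r$; a $\JZDEC$ on a zero register picks the second conjunct $p \imp (q_0 \vee z_r)$, so that $\imp L$ followed by $\vee L$ forks the proof into a main branch continuing with $q_0$ and a zero-check branch carrying $z_r$ together with the (by hypothesis, $r$-free) register multiset. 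After $n$ main-thread steps the antecedent reads $q_n, \ra^{a_n}, \rb^{b_n}, \rc^{c_n}$, matching the first disjunct of $D$ via $\mul R$, iterated $*R_k$, and $\vee R$. Each zero-check branch consumes its leftover copies of $E$ by projecting onto the dummy $N_r = z_r \imp z_r$ via $\wedge L$ and applying $\imp L$ with the axiom $z_r \yields z_r$ as the left premise, preserving $z_r$ step by step; this branch then matches the corresponding $z_r$-disjunct of $D$.

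For the ``derivable $\Rightarrow$ runs forever'' direction I argue by contraposition: assume $\Mf$ halts on $x$ in exactly $N$ steps and exhibit some $n$ such that $E^n, q_S, \ra^x \yields D$ is not derivable, which via $*L$ precludes $(*)$. Take $n = N+1$. The key observation is that any would-be simulation proof must leave the main branch in state $q_F$ after $N$ genuine transitions, and the one extra copy of $E$ then admits no productive left move: no $A_I$ fires, since $q_F$ occurs as the ``current state'' of no instruction, and no $N_r$ fires, since no $z_r$ can be produced from a main-thread antecedent consisting only of $q_F$, register letters, and stuck copies of $E$. To make this precise I would pass to a cut-free derivation (by the cut-elimination theorem above) and either perform a structural analysis of cut-free proofs tracking a ``residual configuration'' invariant at each node of the proof tree, or, more cleanly, build a commutative *-continuous action lattice---concretely, a phase-space model over the free commutative monoid on the atoms $Q \cup \{\ra, \rb, \rc, z_\ra, z_\rb, z_\rc\}$---in which the interpretation of $D$ captures exactly the reachable configurations and the zero-certified side branches.

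The main obstacle is this reverse direction. Cut-free proof search admits many degrees of freedom---the choice of $\wedge L$ projection, the splittings of the antecedent for $\imp L$ and $\mul R$, the choice of $\vee R$-disjunct, the interleaving of dummy and real instruction steps---and spurious derivations that do not faithfully simulate $\Mf$ must all be ruled out. A carefully designed phase-space model, in which valuations track active thread states and pending zero-check obligations, should collapse these degrees of freedom and yield a concrete falsifying interpretation of $(*)$ whenever $\Mf$ halts on $x$.
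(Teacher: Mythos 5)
Your forward direction matches the paper's argument (the paper factors it through a lemma stating that $\Mf$ can perform $k$ steps from $\langle p,a,b,c\rangle$ iff $E^k, p, \ra^a, \rb^b, \rc^c \yields D$ is derivable, and the ``only if'' half of that lemma is exactly your step-by-step simulation). One small slip there: to pass from derivability of $(*)$ to derivability of each $E^n, q_S, \ra^x \yields D$ you need a cut with $E^n \yields E^*$ (or invertibility of $*L_\omega$), not the rule $*L_\omega$ itself, which only gives the converse implication.

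The genuine gap is in the reverse direction, and it is precisely at the point you flag as ``the main obstacle'' and then leave unresolved. Your claim that a halting computation forces the main branch into $q_F$ after $N$ ``genuine transitions,'' leaving an unusable extra copy of $E$, presupposes that every cut-free derivation of $E^n, p, \ra^a, \rb^b, \rc^c \yields D$ consumes the copies of $E$ in lockstep with machine steps. It does not: the paper exhibits an explicit counterexample in which a left premise of $\imp L$ is itself a sequent of the form $E^m, q, \ra^{a'}, \rb^{b'}, \rc^{c'} \yields p$ with $p \in Q$, i.e.\ a ``subroutine'' that burns several copies of $E$ before the main thread advances at all. The paper's resolution is a joint induction on six statements about which sequent shapes are derivable, whose crux is: in a subderivation whose succedent is an atom $p$ (or $p \mul r$) rather than $D$, the zero branch $p' \imp (q_0 \vee z_r)$ of $\JZDEC$ can never be taken, because the resulting $z_r$ has nothing to match against; consequently such a subroutine witnesses a machine transition that remains valid after adding arbitrary amounts $a,b,c$ to the registers, which is what lets it be spliced into the main run. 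Neither of your two proposed remedies supplies this. The ``residual configuration invariant'' is exactly the thing that fails for subroutine subtrees unless you prove the monotonicity statement above, and the phase-space model over the free commutative monoid is only a programme: you would have to verify that the construction is *-continuous, that it soundly interprets every conjunct of $E$ and every disjunct of $D$, and that it still falsifies $(*)$ despite the $z_r$-threads and the nested $\imp$-splittings --- none of which is carried out, and none of which is obviously easier than the syntactic analysis it is meant to replace. As it stands, the hard half of the equivalence is asserted rather than proved.
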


The proof of this theorem is based on the following lemma.

\begin{lemma}\label{Lm:omega}
 Minsky machine $\Mf$ can perform $k$ steps of execution starting from configuration $\langle p, a,b,c \rangle$ if and only if the sequent $E^k,  p, \ra^a, \rb^b, \rc^c \yields D$. 
\end{lemma}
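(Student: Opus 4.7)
The proof proceeds by induction on $k$, with the two directions requiring rather different techniques. For the ``only if'' direction (computation yields derivation), the base case $k=0$ produces a derivation of $p, \ra^a, \rb^b, \rc^c \yields D$ by applying $\vee R$ to select the first disjunct $\ra^* \mul \rb^* \mul \rc^* \mul \bigvee_{q \in Q} q$ of $D$, splitting the antecedent via $\mul R$, using $*R_a, *R_b, *R_c$ to match the counter copies against $\ra^*, \rb^*, \rc^*$, and using $\vee R$ together with the axiom to match $p$ against $\bigvee_q q$. For the inductive step from $k$ to $k+1$, since the machine performs at least one step, state $p$ is not $q_F$ and hence has an instruction $I$. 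I would peel off one copy of $E$ from $E^{k+1}$, apply $\wedge L$ repeatedly to select $A_I$, and use $\imp L$ to realize the transition. For $\INC(p,r,q)$ this reduces (after $\mul L$ on the right premise of $\imp L$) to $E^k, q, r, \ra^a, \rb^b, \rc^c \yields D$, which encodes the successor configuration and is closed by the inductive hypothesis. The nonzero $\JZDEC$ case uses $(p \mul r) \imp q_1$ analogously, splitting $p, r$ into the left premise of $\imp L$. The zero case of $\JZDEC(p,r,q_0,q_1)$ uses $p \imp (q_0 \vee z_r)$ followed by $\vee L$: the $q_0$ branch closes by the inductive hypothesis on $\langle q_0,a,b,c\rangle$, while the $z_r$ branch consumes the remaining $k$ copies of $E$ by selecting the dummy conjunct $N_r = z_r \imp z_r$ each time, and finally closes against the appropriate $z_r$-disjunct of $D$, which is matchable precisely because $r$ really is zero.

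For the ``if'' direction, I would first invoke cut elimination to restrict attention to cut-free derivations of $E^k, p, \ra^a, \rb^b, \rc^c \yields D$, and then induct on $k$. The case $k=0$ is trivial. For $k\ge 1$ one must show that any cut-free proof necessarily consumes one copy of $E$ so as to realize the unique instruction available at state $p$, yielding a residual sequent on which the inductive hypothesis produces the remaining $k-1$ steps. Determinism of $\Mf$ is essential here: once the next configuration is pinned down, no further ambiguity remains in decoding the proof.

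The main obstacle is this ``if'' direction, because cut-free proofs enjoy substantial freedom: they can choose any conjunct of any copy of $E$, apply $\wedge L$-choices and $\vee L$-forks in many orders, and select any disjunct of $D$ at various moments. Nothing syntactically prevents a proof from attempting to select $A_{I'}$ for an instruction $I'$ whose source state does not match the current configuration, or from using $N_r$-dummies prematurely, or from forking into a $z_r$-branch when the corresponding register is nonzero. My plan is to argue that all such deviations leave an unclosable leaf --- typically because a stray state variable has nowhere to go in $D$, or because a $z_r$-disjunct demands a register value that is not actually zero. A cleaner alternative, which I would pursue in parallel, is a semantic one: construct a specific commutative *-continuous action lattice --- for instance, a phase-space model over the free commutative monoid on $\Var$, with interpretations of state, counter, and $z_r$ variables chosen so that the element $\bar v(E)^k \mul \bar v(p) \mul \bar v(\ra)^a \mul \bar v(\rb)^b \mul \bar v(\rc)^c$ lies below $\bar v(D)$ exactly when $\Mf$ performs $k$ steps from $\langle p,a,b,c\rangle$. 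Soundness then transports derivability directly to the machine-theoretic conclusion, bypassing any combinatorial analysis of cut-free proofs.
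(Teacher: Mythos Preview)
Your ``only if'' direction is correct and essentially matches the paper's argument.

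The ``if'' direction, however, has a genuine gap. Your plan is to show that any cut-free proof ``necessarily consumes one copy of $E$ so as to realize the unique instruction available at state $p$,'' and that deviations ``leave an unclosable leaf.'' This is false. The paper exhibits an explicit counterexample: a cut-free derivation of $E^4, p \yields D$ in which the left premise of $\imp L$ is not the trivial $p \yields p$ but rather a nontrivial \emph{subroutine} $E^2, p \yields p$, itself encoding a two-step computation $\langle p,0,0,0\rangle \to \langle p,0,0,0\rangle$. Such subroutines are perfectly closable; they simply represent the same computation in a reshuffled way. So your dichotomy ``either the proof is canonical or it is stuck'' does not hold, and a naive induction on $k$ unwinding a single step at the bottom will not go through.

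What the paper actually does is formulate six auxiliary statements and prove them by joint induction on $k$. The crucial ones concern sequents of the form $\widetilde{E}_1,\ldots,\widetilde{E}_k, q, \ra^{a'}, \rb^{b'}, \rc^{c'} \yields p$ (and $\yields p \mul r$), i.e.\ with a bare state in the succedent instead of $D$. The key observation is that in such a subroutine the zero branch of $\JZDEC$ can never fire, because $z_r$ has no home in the succedent $p$. Consequently every subroutine encodes a computation that is \emph{shift-invariant}: if it witnesses $\langle q,a',b',c'\rangle \to \langle p,0,0,0\rangle$, then it equally witnesses $\langle q,a'+a,b'+b,c'+c\rangle \to \langle p,a,b,c\rangle$ for arbitrary $a,b,c$. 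This is exactly what lets you splice a subroutine back into the main thread and recover a genuine $k$-step run. Without this strengthened inductive hypothesis the argument does not close.

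Your semantic alternative is a plausible idea in principle, but you give no construction, and building a commutative $*$-continuous action lattice whose order relation tracks ``$\Mf$ can perform $k$ steps'' exactly is itself a nontrivial task (in particular, you must verify $*$-continuity and check that every instruction formula $A_I$ receives the intended value). As stated it is a hope, not a proof.
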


Indeed, let $p = q_S$, $a = x$, and $b=c=0$. Then $E^*, q_S, \ra^x \yields D$ is derivable from $\bigl( E^n, q_S, \ra^x \yields D \bigr)_{n=0}^\infty$ by $*L$, and the opposite implication is by cut with $E^n \yields E^*$. Thus, derivability of $(*)$ is equivalent to the fact that $\Mf$ can perform arbitrarily many steps starting from $\langle q_S, x,0,0 \rangle$. Since $\Mf$ is deterministic, this is equivalent to infinite run.

\begin{proof}[Proof of Lemma~\ref{Lm:omega}]
 The {\bf ``only if''} part, from computation to derivation, is easier. We proceed by induction on $k$. In the base case, $k=0$, we derive the necessary sequent, $p, \ra^a, \rb^b, \rc^c \vdash D$ by $\vee R$ (twice) from $p, \ra^a, \rb^b, \rc^c \vdash \ra^* \cdot \rb^* \cdot \rc^* \cdot \bigvee_{q\in Q} q$. The latter is derived using $*R$, $\vee R$, and $\cdot R$. 
 
 For the induction step, consider the first $\Mf$'s instruction executed. If it is $\INC(p, \ra, q)$, we 
 perform the following derivation:
 \[\small
 \infer=[\wedge L]
 {E^k, p, {\ra}^a, {\rb}^b, \rc^c \yields D}
 {\infer[\imp L\text{ ($A_{\INC(p,\ra,q)} = p \imp (q \mul \ra)$)}]{E^{k-1}, A_{\INC(p, \ra, q)}, p,
 {\ra}^a, {\rb}^b, \rc^c \yields D}
 {p \yields p  &
 \infer[\mul L]{E^{k-1}, q \mul \ra, {\ra}^a, {\rb}^b, \rc^c \yields D}
 {E^{k-1}, q, {\ra}^{a+1}, {\rb}^b, \rc^c \yields D}}}
 \]
 Here and further double horizontal line means several applications of a rule.
 
  The topmost sequent $E^{k-1}, q, \ra^{a+1}, \rb^b, \rc^c \yields D$ is derivable by inductive hypothesis, since $\Mf$ can perform $k-1$ execution steps starting from the next configuration $\langle q, a+1, b, c \rangle$. 
 Instructions $\INC(p,\rb,q)$ and $\INC(p,\rc,q)$ are considered similarly.
 
 For $\JZDEC(p,\ra, q_0,q_1)$, we consider two cases. If $a \ne 0$, then the derivation is similar to the one for $\INC$:
 \[\small
 \infer=[\wedge L]
 {E^k, p, \ra^a, \rb^b, \rc^c \yields D}
 {\infer[\wedge L]{E^{k-1}, A_{\JZDEC(p,\ra,q_0,q_1)},
 p, \ra^a, \rb^b, \rc^c \yields D}
 {\infer[\imp L]{E^{k-1}, (p \mul \ra) \imp q_1,
 p, \ra^a, \rb^b, \rc^c \yields D}
 {\infer[\mul R]{p, \ra \yields p \mul \ra}
 {p \yields p & \ra \yields \ra} & 
 E^{k-1}, q_1, \ra^{a-1}, \rb^b, \rc^c \yields D}}}
 \]
 Here $E^{k-1}, q_1, \ra^{a-1}, \rb^b, \rc^c \yields D$ is derivable by the induction hypothesis.
 
 The interesting part is the zero test. Let $a = 0$ and perform the following derivation:
 \[\small
 \infer=[\wedge L]
 {E^k, p, \rb^b, \rc^c \yields D}
 {\infer[\wedge L]{
 E^{k-1}, A_{\JZDEC(p,\ra,q_0,q_1)}, p, \rb^b, \rc^c 
 \yields D}
 {\infer[\imp L]{
 E^{k-1}, p \imp (q_0 \vee z_\ra), p, 
 \rb^b, \rc^c \yields D}
 {p \yields p & \infer[\vee L]
 {E^{k-1}, q_0 \vee z_\ra, \rb^b, \rc^c,  \yields D}
 {E^{k-1}, q_0, \rb^b, \rc^c \yields D & 
 E^{k-1}, z_\ra, \rb^b, \rc^c \yields D}
 }
 }
 }
 \]
 On the left branch, we have $E^{k-1}, q_0, \rb^b, \rc^c \yields D$, which is derivable by the induction hypothesis: $\langle q_0, 0, b, c \rangle$ is the successor for $\langle p, 0, b ,c \rangle$ after applying $\JZDEC(p,\ra,q_0,q_1)$.
 
 The sequent on the right branch, $E^{k-1}, z_\ra, \rb^b, \rc^c \yields D$, can be derived using $\wedge L$ and $\vee R$ from $(z_\ra \imp z_\ra)^{k-1}, z_\ra, \rb^b, \rc^c \yields \rb^* \mul \rc^* \mul z_\ra$. Indeed, $E$ is a conjunction which includes $N_\ra = z_\ra \imp z_\ra$, and $D$ is a disjunction which includes $\rb^* \mul \rc^* \mul z_\ra$. The latter sequent, $(z_\ra \imp z_\ra)^{k-1},  z_\ra, \rb^b, \rc^c \yields \rb^* \mul \rc^* \mul z_\ra$, is derivable.
 
 The cases of $\JZDEC(p,\rb,q_0,q_1)$ and $\JZDEC(p, \rc, q_0, q_1)$ are similar.
 
 \vskip 5pt
 
 For the {\bf ``if''} part we analyze the cut-free derivation of~$E^k, p, \ra^a, \rb^b, \rc^c \vdash D$. It is important to notice that this derivation does not necessarily directly represent a $k$-step workflow of $\Mf$ as shown above.
 
 \begin{example}
  Let $\Mf$ include the following instructions: $\INC(p,\ra,q)$ and $\JZDEC(q,\ra,p,p)$, and consider a 4-step execution of $\Mf$ starting from $\langle p,0,0,0 \rangle$. Such an execution is indeed possible, and can be represented by the following ``canonical'' derivation:
  \[\small
   \infer=[\wedge L]
   {E^4, p \yields D}
   {\infer[\mul L, \imp L]
   {E^3, p \imp (q \mul \ra), p \yields D}
   {p \yields p & 
   \infer=[\wedge L]{E^3, q, \ra \yields D}
   {\infer[\imp L]{E^2, (q \mul \ra) \imp p, q, \ra \yields D}
   {q, \ra \yields q \mul \ra & \infer{E^2, p \yields D}{\infer{\raisebox{4pt}{\vdots}}{p \yields D}}}}
   }}   
  \]

However, there is also an alternative derivation:
\[\small
 \infer=[\wedge L]
   {E^4, p \yields D}
   {\infer[\imp L]{E^3, p \imp (q \mul \ra), p \yields D}{
   \infer=[\wedge L]{E^2, p \yields p}
   {\infer[\mul L, \imp L]{E, p \imp (q \mul \ra), p \yields p}{p \yields p & 
   \infer=[\wedge L]{E, q, \ra \yields p}{\infer[\imp L]{(q \mul \ra) \imp p, \ra, q \yields p}{q, \ra\yields q \mul \ra & p \yields p}}}} & 
   \infer[\mul L]{E, q \mul \ra \yields D}
   {\infer=[\wedge L]{E, q, \ra \yields D}
   {\infer[\imp L]{(q \mul \ra) \imp p, q, \ra \yields D}{q, \ra \yields q \mul \ra & p \yields D}}}}}
\]

In this derivation, there is a ``subroutine'' (the left subtree) which moves from $\langle p,0,0,0 \rangle$ to $\langle p,0,0,0 \rangle$ in 2 steps.
\end{example}

In general, such ``subroutines'' could be represented by subderivations for sequents of the form $E^m, q, \ra^{a'}, \rb^{b'}, \rc^{c'} \yields p$ (while in the ``canonical'' derivation they are all trivialized to $p \yields p$). This corresponds to $\langle q,a',b',c' \rangle \to \langle p,0,0,0 \rangle$ in $m$ steps. The crucial observation, however, is that in such ``subroutines'' $\JZDEC$ cannot branch to the zero ($r=0$) case. The reason is that in the subtree there is no $D$ which supports the usage of $z_r$. Therefore, such a ``subroutine'' also validates the transition 
$\langle q,a'+a,b'+b,c'+c \rangle \to \langle p,a,b,c \rangle$ for arbitrary $a,b,c$. This allows connecting the ``subroutine'' to the main execution workflow.

The idea described above is formalized in the usual boring way, proving several statements by joint induction. Let $\widetilde{E}_i$ denote any formula in the conjunction $E$ or a conjunction of such formulae (in particular, $\widetilde{E}_i$ could be $E$ itself). For convenience, let $R = \{\ra,\rb,\rc\}$, $Z = \{ z_\ra, z_\rb, r_\rc \}$, and $Z_{\bar{r}} = Z - \{ z_r \}$ (e.g., $Z_{\bar{\rb}} = \{ z_\ra, z_\rc \}$).

\begin{enumerate}
 \item\label{St:bad} Sequents of the form $\widetilde{E}_1, \ldots, \widetilde{E}_k, \ra^a, \rb^b, \rc^c \yields t$, where $t \in Q \cup Z$, are never derivable, neither are sequents of the form 
 $\widetilde{E}_1, \ldots, \widetilde{E}_k, \ra^a, \rb^b, \rc^c \yields t \cdot r$, where $r \in R$.
 \item\label{St:zsub} Sequents of the form $\widetilde{E}_1, \ldots, \widetilde{E}_k,z_r, \ra^a, \rb^b, \rc^c \yields t$, where $r \in R$ and $t \in Q \cup Z_{\bar{r}}$, are never derivable, neither are sequents of the form 
 $\widetilde{E}_1, \ldots, \widetilde{E}_k, z_r, \ra^a, \rb^b, \rc^c \yields t \cdot r'$, where $r, r' \in R$ and $t \in Q \cup Z_{\bar{r}}$. 
 \item\label{St:z} If $\widetilde{E}_1, \ldots, \widetilde{E}_k, z_\ra, \ra^a, \rb^b, \rc^c \yields D$ is derivable, then $a = 0$. Similarly for $\rb$ and $\rc$.
 \item\label{St:sub} If $\widetilde{E}_1, \ldots, \widetilde{E}_k, q, \ra^{a'}, \rb^{b'}, \rc^{c'} \yields p$ is derivable, where $p,q \in Q$, then $\Mf$ can move from $\langle q,a'+a,b'+b,c'+c \rangle$ to $\langle p,a,b,c \rangle$ in $k$ steps for any $a,b,c$.
 \item\label{St:subr} If $\widetilde{E}_1, \ldots, \widetilde{E}_k, q, \ra^{a'}, \rb^{b'}, \rc^{c'} \yields p \mul \ra$, where $p,q \in Q$, is derivable, then $\Mf$ can move from $\langle q, a' + a, b' + b, c' + c \rangle$ to $\langle p, a+1, b,c \rangle$ in $k$ steps for any $a,b,c$. Similarly for $\ra$, $\rb$, $\rc$. 
 \item\label{St:main} If $\widetilde{E}_1, \ldots, \widetilde{E}_k, p, \ra^a, \rb^b, \rc^c \yields D$ is derivable ($p \in Q$), then $\Mf$ can perform $k$ steps, starting from $\langle p,a,b,c \rangle$.
\end{enumerate}

Statement~\ref{St:main} with $\widetilde{E}_1 = \ldots = \widetilde{E}_k = E$ yields our goal (the ``if'' part of Lemma~\ref{Lm:omega}).

Rules $\vee L$ and $\mul L$ are invertible (this can be established using cut), so we can suppose that in our derivations they are always applied immediately.

Next, we reorganize the derivation so that no right rule ($\vee R$, $\mul R$, or $*R$) appears below a left rule ($\wedge L$, $\vee L$, $\imp L$, $\mul L$). Such a reorganization is possible since $\imp R$ is never applied (there are no formulae of the form $F \imp G$ in succedents). For example, $\mul R$ and $\vee L$ are exchanged in the following way:
$$\small
 \infer[\mul R]{\Pi', E \vee F, \Pi'', \Delta \yields A \mul B}
 {\infer[\vee L]{\Pi', E \vee F, \Pi'' \yields A}
 {\Pi', E, \Pi'' \yields A & 
 \Pi', F, \Pi'' \yields A} & 
 \Delta \yields B}
 $$
 transforms into
 $$\small
 \infer[\vee L]{\Pi', E \vee F, \Pi'', \Delta \yields A \mul B}
 {\infer[\mul R]{\Pi', E, \Pi'', \Delta \yields A \mul B}
 {\Pi', E, \Pi'' \yields A & \Delta \yields B} &
 \infer[\mul R]{\Pi', F, \Pi'', \Delta \yields A \cdot B}{\Pi', F, \Pi'' \yields A & \Delta \yields B}}
 $$
 Transformations in other are similar.

 Now let us prove our statements by joint induction on $k$. The base cases ($k=0$) are considered as follows. For statements~\ref{St:bad} and~\ref{St:zsub}, we have a ``lonely'' $t$ in the succedent, which could not be matched with another $t$ to form an axiom. Thus, the sequents are not derivable. In statement~\ref{St:z}, when deriving $z_\ra, \ra^a, \rb^b, \rc^c \yields D$, we have to choose $\rb^* \mul \rc^* \mul z_\ra$ from $D$ (otherwise $z_\ra$ does not have a match). Therefore, $a = 0$, since there are no occurrences of $\ra$ in the succedent. In statement~\ref{St:sub}, the sequent should be of the form $p \yields p$, that is, $q = p$ and $a' = b' = c' = 0$. The 0-step move from $\langle p,a,b,c \rangle$ to $\langle p,a,b,c \rangle$ is trivial. 
 Similarly, for statement~\ref{St:subr}, we have exactly $p, \ra \yields p \mul \ra$, thus, $q = p$, $a' = 1$, and $b' = c' = 0'$, and a 0-step move from $\langle p,a+1,b,c \rangle$ to $\langle p,a+1,b,c \rangle$.
 Finally, the base case for statement~\ref{St:main} is obvious, since performing 0 steps is always possible.
 
Now let  $k \ne 0$.
If the lowermost rule applied in our derivation is $\wedge L$, then it just changes one of the $\widetilde{E}_i$'s to a formula of the same form (formally, here we use a nested induction on derivation height). Thus, the interesting case is $\imp L$, when one of the $\widetilde{E}_i$'s is of the form $F \imp G$, and it gets decomposed. (Notice that from $A_{\JZDEC(p,r,q_0,q)}$ we have already taken, or ``chosen,'' only one conjunct.)

{\em Statement~\ref{St:bad}.} We have $\widetilde{E}_i = F \imp G$, and the left premise of $\imp L$ is again of the form $\widetilde{E}_1, \ldots, \widetilde{E}_{k'}, \ra^{a'}, \rb^{b'}, \rc^{c'} \yields F$, where $k' < k$ and $F$ is either $t'$ or $t' \cdot r'$, $t' \in Q \cup Z$, $r' \in R$. The latter is due to the form of conjuncts in $E$. Since $k' < k$, we can apply the induction hypothesis and conclude that the left premise is not derivable.

{\em Statement~\ref{St:zsub}.} The occurrence of $z_r$ should go to the left premise of $\imp L$, otherwise we face contradiction with statement~\ref{St:bad}. Consider two cases. If $\widetilde{E}_i = F \imp G = z_r \imp z_r$ (with the same $r$), then the right premise of $\imp L$ is again of the same form, as the goal sequent, but with a smaller $k$, and we proceed by induction. Otherwise, $F$ is of the form $t'$ or $t' \cdot r''$, where $t' \in Q \cup Z_{\bar{r}}$ and $r'' \in R$. In this case we apply the induction hypothesis to the left premise.

{\em Statement~\ref{St:z}.} Again, by statement~\ref{St:bad} $z_\ra$ should go to the left premise. If $F \ne z_{\ra}$, then derivability of the left premise contradicts statement~\ref{St:zsub}. Thus, $\widetilde{E}_i = z_{\ra} \imp z_{\ra}$, and we apply the induction hypothesis to the right premise.

{\em Statement~\ref{St:sub}.} Again, $q$ should go to the left premise. If $\widetilde{E}_i = z_r \imp z_r$, then the right premise is of the form $\widetilde{E}_1, \ldots, \widetilde{E}_{k'}, z_r, \ra^{a''}, \rb^{b''}, \rc^{c''} \yields p$ and could not be derivable by statement~\ref{St:zsub}. Thus, three cases remain possible; for simplicity, let $r = \ra$, the cases of $r = \rb$ and $r = \rc$ are handled in the same way.
\begin{itemize}
 \item $\widetilde{E}_i = A_{\INC(p',r,q')} = p' \imp (q' \mul \ra)$. We have the following application of $\imp L$ and an immediate application of $\mul L$:
 \[
 \small
 \infer[\imp L]
 {\widetilde{E}_1, \ldots, \widetilde{E}_k, q, \ra^{a'}, \rb^{b'}, \rc^{c'} \yields p}
 {\widetilde{E}_1, \ldots, \widetilde{E}_{i-1}, q, \ra^{a_1}, \rb^{b_1}, \rc^{c_1} \yields p' & 
 \infer[\mul L]{\widetilde{E}_{i+1}, \ldots, \widetilde{E}_{k}, q' \mul \ra, \ra^{a_2}, \rb^{b_2}, \rc^{c_2} \yields p}
 {\widetilde{E}_{i+1}, \ldots, \widetilde{E}_{k}, q', \ra^{a_2+1}, \rb^{b_2}, \rc^{c_2} \yields p}}
 \]
Here $a_1 + a_2 = a'$, $b_1 + b_2 = b'$, $c_1 + c_2 = c'$. By induction hypothesis, $\Mf$ can move from $\langle q, a_1 + a_2 + a, b_1 + b_2 + b, c_1 + c_2 + c \rangle$ to $\langle p', a_2 + a, b_2 + b, c_2 + c \rangle$ in $i-1$ steps.  Next, we apply $\INC(p',r,q')$ and change the configuration to $\langle q', a_2+1+a, b_2+b,c_2+c \rangle$. Finally, we move to $\langle p, a,b,c \rangle$ in $k-i$ steps again by induction hypothesis. The total number of steps is $(i-1)+1+(k-i) =k$.
\item $\widetilde{E}_i = (p' \mul \ra) \imp q'$, the first part of $A_{\JZDEC(p',r,q_0,q')}$. 
\[\small
 \infer[\imp L]
 {\widetilde{E}_1, \ldots, \widetilde{E}_k, q, \ra^{a'}, \rb^{b'}, \rc^{c'} \yields p}
 {\widetilde{E}_1, \ldots, \widetilde{E}_{i-1}, q, \ra^{a_1}, \rb^{b_1}, \rc^{c_1} \yields p' \mul \ra & 
 \widetilde{E}_{i+1}, \ldots, \widetilde{E}_k, q', \ra^{a_2}, \rb^{b_2}, \rc^{c_2} \yields p}
\]
Applying statement~\ref{St:subr} to the left premise yields that $\Mf$ can move from $\langle q, a_1 + a_2 + a, b_1 + b_2 + b, c_1 + c_2 + c \rangle$ to $\langle p', a_2 + a + 1, b_2 + b, c_2 + c \rangle$ in $i-1$ steps. Next, since $a_2 + a + 1 > 0$, applying $\JZDEC(p',r,q_0,q')$ changes the configuration to $\langle q', a_2 + a_1, b_2 + b, c_2 +c \rangle$. Finally, by induction hypothesis (statement~\ref{St:subr}) applied to the left premise, we reach $\langle p,a,b,c \rangle$ in $k-i$ steps.
\item $\widetilde{E}_i = p' \imp (q_0 \vee z_\ra)$, the second  part of $\JZDEC(p',r,q_0,q')$. Taking $r = \ra$, we get the following application of $\imp L$, preceded by an immediate application of $\vee L$:
\[\small
 \infer[\imp L]
 {\widetilde{E}_1, \ldots, \widetilde{E}_k, q, \ra^{a'}, \rb^{b'}, \rc^{c'} \yields p}
 {\widetilde{E}_1, \ldots, \widetilde{E}_{i-1}, q,
 \ra^{a_1}, \rb^{b_1}, \rc^{c_1} \yields p' & 
 \infer[\vee L]{
 \widetilde{E}_{i+1}, \ldots, \widetilde{E}_k, q' \vee z_\ra, \ra^{a_2}, \rb^{b_2}, \rc^{c_2} \yields p}
 {\widetilde{E}_{i+1}, \ldots, \widetilde{E}_k, z_\ra, \ra^{a_2}, \rb^{b_2}, \rc^{c_2} \yields p & \ldots, q', \ldots \yields p}
 }
\]
One of the premises of $\vee L$, namely, $\widetilde{E}_{i+1}, \ldots, \widetilde{E}_k, z_\ra, \ra^{a_2}, \rb^{b_2}, \rc^{c_2} \yields p$, is not derivable by statement~\ref{St:z} (induction hypothesis).
\end{itemize}

{\em Statement~\ref{St:subr}} is established in the same way as statement~\ref{St:sub}, with a routine change of $p$ to $p \cdot \ra$ and $a$ to $a+1$.

{\em Statement~\ref{St:main}.} The difference from the previous two statements is that here, on the main thread of derivation, the second case of $\JZDEC$ can realized.

As for previous statements, $q$ should go to the left premise and $\widetilde{E}_i$ could not be $z_r \imp z_r$ (due to statements~\ref{St:bad} and~\ref{St:zsub}).
Let $r = \ra$ and consider the three possible cases.

\begin{itemize}
 \item $\widetilde{E}_i = A_{\INC(p',r,q')} = p' \imp (q' \mul \ra)$. 
 \[\small
  \infer[\imp L]
  {\widetilde{E}_1, \ldots, \widetilde{E}_k, p , \ra^a, \rb^b, \rc^c\yields D}
  {\widetilde{E}_1, \ldots, \widetilde{E}_{i-1}, q, \ra^{a_1}, \rb^{b_1}, \rc^{c_1} \yields p' & 
  \infer[\mul L]
  {\widetilde{E}_{i+1}, \ldots, \widetilde{E}_k, q' \cdot \ra, \ra^{a_2}, \rb^{b_2}, \rc^{c_2} \yields D}
  {\widetilde{E}_{i+1}, \ldots, \widetilde{E}_k, q', \ra^{a_2+1}, \rb^{b_2}, \rc^{c_2} \yields D}}
 \] By induction hypothesis, statement~\ref{St:sub}, $\Mf$ can move from $\langle q, a, b, c \rangle$, which is $\langle q, a_1 + a_2, b_1 + b_2, c_1 + c_2 \rangle$, to $\langle p', a_2, b_2, c_2 \rangle$ in $i-1$ steps. Next, applying $\INC(p',r,q')$ yields $\langle q', a_2+1, b_2, c_2 \rangle$.
 Finally, by induction hypothesis, statement~\ref{St:main}, applied to the right premise, $\Mf$ can perform $k-i$ more steps. The total number of steps performed starting from $\langle q,a,b,c \rangle$  equals $k$.
 
 \item $\widetilde{E}_i = (p' \mul \ra) \imp q'$, the first part of $A_{\JZDEC(p',\ra,q_0,q')}$. 
 \[
  \small
  \infer[\imp L]
  {\widetilde{E}_1, \ldots, \widetilde{E}_k, p, \ra^a, \rb^b, \rc^c \yields D}
  {\widetilde{E}_1, \ldots, \widetilde{E}_{i-1}, q, \ra^{a_1}, \rb^{b_1}, \rc^{c_1} \yields p' \mul \ra
   &
   \widetilde{E}_{i+1}, \ldots, \widetilde{E}_k, q', \ra^{a_2}, \rb^{b_1}, \rc^{c_1} \yields D}
 \]
By induction hypothesis, statement~\ref{St:subr}, $\Mf$ can move from $\langle q, a, b, c \rangle = \langle q, a_1 + a_2, b_1 + b_2, c_1 + c_2 \rangle$ to $\langle p', a_2 + 1, b_2, c_2 \rangle$ in $i-1$ steps. Since $a_2 + 1 > 0$, applying $\JZDEC(p',r,q_0,q')$ changes $\langle p', a_2 + 1, b_2, c_2 \rangle$ to $\langle q', a_2, b_2, c_2 \rangle$. Finally, applying statement~\ref{St:main} (induction hypothesis), we perform the remaining $k-i$ steps. 

\item $\widetilde{E}_i = p' \imp (q_0 \vee z_\ra)$, the second part of $A_{\JZDEC(p',\ra,q_0,q')}$.  The goal sequent $\widetilde{E}_1, \ldots, \widetilde{E}_k, p, \ra^a, \rb^b, \rc^c \yields D$ is derived, using $\imp L$, from sequents $\widetilde{E}_1, \ldots, \widetilde{E}_{i-1}, q,
 \ra^{a_1}, \rb^{b_1}, \rc^{c_1} \yields p'$ and $\widetilde{E}_{i+1}, \ldots, \widetilde{E}_k, q_0 \vee z_\ra, \ra^{a_2},  \rb^{b_2}, \rc^{c_2} \yields D$, where the latter is derived by an immediate application of $\vee L$:
 \[\small
  \infer[\vee L]
 {\widetilde{E}_{i+1}, \ldots, \widetilde{E}_k, q_0 \vee z_\ra, \ra^{a_2},  \rb^{b_2}, \rc^{c_2} \yields D}
 {\widetilde{E}_{i+1}, \ldots, \widetilde{E}_k, q_0, \ra^{a_2},  \rb^{b_2}, \rc^{c_2} \yields D 
 &
 \widetilde{E}_{i+1}, \ldots, \widetilde{E}_k, z_\ra, \ra^{a_2},  \rb^{b_2}, \rc^{c_2} \yields D}
 \]
Here, again, $a = a_1 + a_2, b = b_1 + b_2, c = c_1 + c_2$. Moreover, derivability of $\widetilde{E}_{i+1}, \ldots, \widetilde{E}_k, z_\ra, \ra^{a_2},  \rb^{b_2}, \rc^{c_2} \yields D$ implies $a_2 = 0$ by statement~\ref{St:z} (induction hypothesis). 

Now by statement~\ref{St:sub} (induction hypothesis) applied to the left premise we conclude that $\Mf$ can move from $\langle q,a,b,c \rangle = \langle q, a_1, b_1 + b_2, c_1 + c_2 \rangle$ to $\langle p', 0, b_2, c_2 \rangle$ in $i-1$ steps. In $\langle p', 0, b_2, c_2 \rangle$ the value of $\ra$ is zero, so applying $\JZDEC(p',\ra,q_0,q')$ changes the configuration to $\langle q_0, 0, b_2, c_2 \rangle$. Finally, applying statement~\ref{St:main} (induction hypothesis) to $\widetilde{E}_{i+1},  \ldots, \widetilde{E}_k, q_0, \rb^{b_2}, \rc^{c_2} \yields D$ shows that $\Mf$ can perform the remaining $k-i$ steps, starting from $\langle q_0, 0, b_2, c_2 \rangle$.
\end{itemize}
This finishes the proof of Lemma~\ref{Lm:omega}.
\end{proof}

\section{$\Sigma_1^0$-Completeness of $\CommACT$}\label{S:ACT}

\subsection{Circular Proofs for Circular Computations}

We start with a reformulation of infinitary commutative action logic, $\CommACTomega$, as a calculus with {\em non-well-founded derivations} (instead of the $\omega$-rule), a commutative variant of the system introduced by Das and Pous~\cite{DasPous2018Action}.
This new calculus, denoted by $\CommACTinfty$, is obtained from $\CommACTomega$ by replacing $*L_\omega$ and $*R_n$ with the following rules:
\[
 \infer[*L]
 {\Gamma, A^* \yields C}
 {\Gamma \yields C & \Gamma, A^*, A \yields C}
 \qquad
 \infer[*R_0]
 {\yields A^*}
 {}
 \qquad
 \infer[*R]
 {\Gamma, \Delta \yields A^*}
 {\Gamma \yields A & \Delta \yields A^*}
\]
Unlike $\CommACTomega$, proofs in $\CommACTinfty$ can be non-well-founded, that is, include infinite branches. These infinite branches should obey the following {\em correctness condition:} on each such branch there should be a {\em trace} of a formula of the form $A^*$, which undergoes $*L$ infinitely many times.

Equivalence between $\CommACTinfty$ and $\CommACTomega$ can be proved in the same way as in the non-commutative case~\cite{DasPous2018Action}. We omit this proof, because $\CommACTinfty$ is not formally used in our complexity arguments; we rather use it to clarify the ideas behind them. 

Reformulation of $\CommACTomega$ as $\CommACTinfty$ makes the construction of the previous section more straightforward: {\em an infinite execution of $\Mf$ is represented by an infinite derivation of $(*)$.}

\begin{example}\label{Ex:INCp}
Let $\Mf$ include the following instruction: $\INC(q_S, \ra, q_S)$. This machine runs infinitely on any input $x$, and this is represented by the following infinite derivation of $(*)$ in $\CommACTinfty$:
\[\small
 \infer[*L]
 {E^*, q_S, \ra^x \yields D}
 {q_S, \ra^x \yields D & 
 \infer=[\wedge L]
 {E^*, E, q_S, \ra^x \yields D}
 {\infer[\imp L]{E^*, q_S \imp (q_S \mul \ra),q_S, \ra^x \yields D}
 {q_S \yields q_S & 
 \infer[\mul L]{E^*, q_S \mul \ra, \ra^x \yields D}
 {\infer{E^*, q_S, \ra^{x+1} \yields D}
 {\infer{\raisebox{4pt}{\vdots}}
 {\infer{E^*, q_S, \ra^{x+2} \yields D}
 {\raisebox{4pt}{\vdots}}}}}}}}
\]
\end{example}

\begin{example}
The zero-check in $\JZDEC$ instantiates an auxiliary infinite branch each time it gets invoked. For example, the infinite run of a machine with $\JZDEC(q_S,\ra,q_S,q_S)$ on input 0 induces the following derivation of $(*)$ with infinitely many infinite branches:
\[
 \small
 \infer[*L]
 {E^*, q_S \yields D}
 {q_S \yields D & 
 \infer=[\wedge L]{E^*, E, q_S \yields D}
 {\infer[\imp L]{E^*, q_S \imp (q_S \vee z_\ra), q_S \yields D}
 {q_S \yields q_S & 
 \infer[\vee L]{E^*, q_S \vee z_\ra \yields D}
 {\infer[*L]{E^*, z_\ra \yields D}
 {z_\ra \yields D & 
 \infer=[\wedge L]{E^*, E, z_\ra \yields D}
 {\infer[\imp L]{E^*, z_\ra \imp z_\ra, z_\ra \yields D}{z_\ra \yields z_\ra & \infer{E^*, z_\ra \yields D}{\raisebox{4pt}{\vdots}}}}}
 & \infer{E^*,q_S \yields D}
 {\infer{\raisebox{4pt}{\vdots}}
 {\infer[\vee L]{E^*, q_S \vee z_\ra \yields D}{\infer{E^*, z_\ra \yields D}{\raisebox{4pt}{\vdots}} & \infer{E^*, q_S \yields D}{\raisebox{4pt}{\vdots}}}}}}
 }}}
\]
\end{example}

Now let us consider a specific class of Minsky computations, namely, {\em circular} ones.

\begin{definition}
 Minsky machine $\Mf$ runs circularly on input $x$, if its execution visits one configuration $\langle p, a,b,c \rangle$ twice (and, due to determinism, infinitely many times, since the sequence of configurations becomes periodic). 
\end{definition}

The key idea is that circular behaviour is represented by {\em circular proofs} of $(*)$ in $\CommACTinfty$:
\[
 \infer[*L]
 {E^*, q_S, \ra^x \yields D}
 {q_S, \ra^x \yields D & 
 \infer{E^*, E, q_S, \ra^x \yields D}
 {\infer{\raisebox{4pt}{\vdots}}
 {\infer[*L]{E^*, p, \ra^a, \rb^b, \rc^c \yields D\tikzmark{B}}
 {p, \ra^a, \rb^b, \rc^c \yields D & 
 \infer{E^*, E, p, \ra^a, \rb^b, \rc^c \yields D}{\infer{\raisebox{4pt}{\vdots}}
 {E^*,p, \ra^a, \rb^b, \rc^c \yields D\tikzmark{A}%
\begin{tikzpicture}[overlay, remember picture, >=latex, distance=-3.0cm]
 \draw[->,in=160,out=160] ($(pic cs:A)+(.1,.1)$) to
 ($(pic cs:B)+(.1,.1)$);
\end{tikzpicture}
 }}}}}}
\]
Here the main infinite branch of our derivation returns to the same sequent, $E^*, p,\ra^a,\rb^b,\rc^c \yields D$, and we may replace further development of the infinite branch by a {\em backlink} to the earlier occurrence of this sequent. Using this backlink, the circular proof can be unravelled into an infinite one.
The correctness condition is guaranteed by the $*L$ rule just above $E^*, p, \ra^a, \rb^b, \rc^c \yields D$: after unravelling, $E^*$ will undergo $*L$ infinitely often.

In fact, circular proofs in $\CommACTinfty$ yield sequents derivable in the narrower logic $\CommACT$ (as defined in Section~\ref{S:upper}).
This can be shown by a commutative modification of the corresponding argument from~\cite{DasPous2018Action}. In this article, however, we perform this translation explicitly for concrete circular derivations used for encoding circular computations. (This is done for simplicity, in order to avoid considering complicated circular proofs with entangled backlinks.)

A similar idea was used to prove undeciability in the non-commutative case, for $\ACT$~\cite{Kuznetsov2019LICS}. In the commutative situation it is even more straightforward, since Minsky computation here is represented directly, without a detour through totality of context-free grammars~\cite{Buszkowski2007,Kuznetsov2019LICS}.

Unfortunately, the translation from circular computations to circular proofs works only in one direction.

\begin{example}
 Let $\Mf$ include the following instruction: $\INC(q_S,\ra,q_S)$. Then the infinite run of $\Mf$, being not a circular one, can be represented by a circular proof of $(*)$ in $\CommACTinfty$. The ``canonical'' proof (see Example~\ref{Ex:INCp}), indeed, is not circular, but there exists an alternative circular one: 
 \[\small
  \infer[Cut]
  {E^*, q_S \yields D}
  {\infer[*L]
  {\tikzmark{BB} E^*, q_S \yields \ra^* \mul q_S}
  {\mbox{\hbox to 10pt{$q_S \yields \ra^* \mul q_S$}} &
  \infer=[\wedge L]
  {E^*, E, q_S \yields \ra^* \mul q_S}
  {\infer[\mul L, \imp L]
  {E^*, q_S \imp (q_S \mul \ra), q_S \yields \ra^* \mul q_S}
  {q_S \yields q_S & \infer[Cut]
  {E^*, q_S, \ra \yields \ra^* \mul q_S}
  {\infer[\mul R]
  {E^*, q_S, \ra \yields \ra \mul (\ra^* \mul q_S)}
  {\tikzmark{AA}
  E^*, q_S \yields \ra^* \mul q_S
  & \ra \yields \ra}
  & \ra \mul (\ra^* \mul q_S) \yields \ra^* \mul q_S}
  }}}
  & 
  \hspace*{-3em}
  \ra^* \mul q_S \yields D}
\begin{tikzpicture}[overlay, remember picture, >=latex, distance=-3.0cm]
 \draw [->] ($(pic cs:AA)+(-.1,.1)$) to[out=0,in=15] ($(pic cs:BB)+(-.05,.1)$);
\end{tikzpicture}
 \]
 where derivations of $q_S \yields \ra^* \mul q_S$, $\ra^* \mul q_S \yields D$, and $\ra \mul (\ra^* \mul q_S) \yields \ra^* \mul q_S$ are obvious.
\end{example}

Thus, we cannot just say ``$\Mf$ runs circularly on $x$ if and only if $(*)$ is derivable in $\CommACT$, and therefore $\CommACT$ is undecidable.'' The ``if'' direction fails. We prove $\Sigma_1^0$-completeness (and, in particular, undecidability) of $\CommACT$ using an indirect technique of {\em effective inseparability,} which we develop in the next subsection. This technique is basically the same as in the non-commutative case~\cite{Kuznetsov2021TOCL}, but the use of Minsky machines instead of Turing ones requires some minor modifications.

\subsection{Effective Inseparability}

The material of this subsection is not new, but rather classical. 
We use the same techniques as in the non-commutative case~\cite{Kuznetsov2021TOCL}; see also~\cite{Rogers,Speranski2016}.
However, we accurately represent these techniques here, because 3-counter machines are quite a restrictive computational model, so we have to ensure that all the constructions work for them as well as for more elaborate computational models, such as Turing machines.

Let 
\begin{align*}
 & \Cc = \{ \langle \Mf, x \rangle \mid \mbox{$\Mf$ runs circularly on $x$} \};\\
 & \Hc = \{ \langle \Mf, x \rangle \mid \mbox{$\Mf$ halts on $x$} \};\\
 & \overline{\Hc} = \{ \langle \Mf, x \rangle \mid \mbox{$\Mf$ does not halt on $x$} \}.
\end{align*}
Obviously, $\Cc \subset \overline{\Hc}$ and $\Hc \cap \overline{\Hc} = \varnothing$. 

In this subsection we are going to show that 
$\Cc$ and $\Hc$ are {\em inseparable:} there is no decidable set $\Kc$ such that $\Cc \subseteq \Kc \subseteq \overline{\Hc}$. Moreover, we shall establish a stronger property of {\em effective inseparability,} from which it will follow, by Myhill's theorem~\cite{Myhill1955}, that if $\Kc$ is r.e.\ and $\Cc \subseteq \Kc \subseteq \overline{\Hc}$, then $\Kc$ is $\Sigma_1^0$-complete. We shall use this result for the set $\Kc(\CommACT) = \{ \langle \Mf, x \rangle \mid \mbox{$(*)$ is derivable in $\CommACT$} \}$ in order to prove that $\CommACT$ is $\Sigma_1^0$-complete.
 
We tacitly suppose that each Minsky machine $\Mf$ is encoded by a natural number $\gn{\Mf}$ in an injective and computable way. We also fix the following encoding of pairs of natural numbers, bijective and computable: 
\[
 [x,y] = \frac{(x+y)(x+y+1)}{2} + x.
\]
Under this convention, 
$\Cc = \{ [\gn{M}, x] \mid \mbox{$\Mf$ runs circularly on $x$} \}$ is a set of natural numbers, and similarly for $\Hc$ and $\overline{\Hc}$.

Let $W_u$, where $u$ is a natural number, be ``the $u$-th r.e.\ set.'' Formally, if $u = \gn{\Mf}$ for some $\Mf$, then $W_u$ is the domain of the function computed by $\Mf$, that is, $W_{\gn{\Mf}} = D_\Mf$; if $u$ does not encode any Minsky machine, let $W_u = \varnothing$.

Now let us define the main notion in our construction, effective inseparability.

\begin{definition}
 Two disjoint sets $A$ and $B$ of natural numbers  are {\em effectively inseparable,} if there exists a computable function $f: \NN \times \NN \to \NN$, such that if $W_u \supseteq A$, $W_v \supseteq B$, and $W_u \cap W_v = \varnothing$, then $f(u,v)$ is defined and $f(u,v) \notin W_u \cup W_v$.
\end{definition}

Effective inseparability yields recursive inseparability, in the following sense:
\begin{prop}
 If $A$ and $B$ are effectively inseparable, then there is no such decidable $K$ that $A \subseteq K$ and $K \cap B = \varnothing$.
\end{prop}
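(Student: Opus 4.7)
The plan is to argue by contradiction, reducing directly to the definition of effective inseparability. Suppose, toward a contradiction, that a decidable set $K$ with $A \subseteq K$ and $K \cap B = \varnothing$ exists. The first step is to observe that decidability of $K$ immediately gives decidability (hence r.e.-ness) of its complement $\overline{K} = \NN \setminus K$ as well: running the decider for $K$ and flipping the answer enumerates $\overline{K}$. Consequently there exist natural numbers $u$ and $v$ such that $W_u = K$ and $W_v = \overline{K}$.

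Next, I would verify the three hypotheses of effective inseparability for the pair $(u,v)$. The inclusion $W_u \supseteq A$ is exactly the assumption $K \supseteq A$; the inclusion $W_v \supseteq B$ is the reformulation of $K \cap B = \varnothing$ as $B \subseteq \overline{K} = W_v$; and disjointness $W_u \cap W_v = K \cap \overline{K} = \varnothing$ is automatic. The witnessing function $f$ supplied by effective inseparability therefore yields a defined value $f(u,v)$ satisfying $f(u,v) \notin W_u \cup W_v$. But $W_u \cup W_v = K \cup \overline{K} = \NN$, so no such value can exist---a contradiction.

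There is essentially no obstacle here beyond unpacking definitions; the only substantive ingredient is the standard observation that a decidable set and its complement are both r.e., which is required to produce the indices $u$ and $v$ that feed into the effective-inseparability hypothesis. I would therefore keep the proof as brief as the argument above.
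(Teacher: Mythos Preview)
Your proposal is correct and follows essentially the same route as the paper's proof: both use decidability of $K$ to obtain r.e.\ indices $u,v$ for $K$ and $\overline{K}$, verify the hypotheses of effective inseparability, and derive a contradiction from $f(u,v) \notin W_u \cup W_v = \NN$.
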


\begin{proof}
 Indeed, if $K$ is decidable, then both $K$ and $\overline{K} = \NN - K$ are r.e. Therefore, $K = W_u$ and $\overline{K} = W_v$ for some $u,v$. We have $W_u \supseteq A$, $W_v \supseteq B$, and $W_u \cap W_v = \varnothing$, but $f(u,v) \notin W_u \cup W_v$ is impossible, since $W_u \cup W_v = \NN$.
\end{proof}

Effective inseparability is closely related to the notion of {\em creative sets,} for which Myhill's theorem holds.

\begin{definition}
 A set $K$ of natural numbers is {\em creative,} if $K$ is r.e.\ and there exists a computable function $h$ such that if $W_u$ is disjoint with $A$, then $h(u)$ is defined and $h(u) \notin W_u \cup K$.
\end{definition}

\begin{theorem}[J.\,Myhill 1955]
 If $K$ is creative, then any r.e.\ set $D$ is m-reducible to $K$. In other words, any creative set is $\Sigma_1^0$-complete.~{\rm\cite{Myhill1955}}
\end{theorem}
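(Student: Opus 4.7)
The plan is to show that every r.e.\ set $D$ many-one reduces to $K$, by combining the witness $h$ supplied in the definition of creative with Kleene's parametric recursion theorem. By that definition, whenever $W_u \cap K = \varnothing$ one has $h(u) \in \overline{K} \setminus W_u$; in other words, $h$ is a productive function for $\overline{K}$.

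First I would use the s-m-n theorem to construct a total computable binary function $f$ satisfying
\[
 W_{f(x,e)} = \begin{cases} \{h(e)\} & \text{if } x \in D, \\ \varnothing & \text{otherwise.} \end{cases}
\]
Such an $f$ is routine: on input $(x,e)$, dovetail an enumeration of $D$ with the computation of $h(e)$, and emit the single value $h(e)$ as soon as $x$ appears in the enumeration.

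Next I would invoke the parametric recursion theorem to obtain a total computable $q$ with $W_{q(x)} = W_{f(x, q(x))}$ for every $x$, and set $g(x) = h(q(x))$. The verification splits into two cases. If $x \in D$ then $W_{q(x)} = \{h(q(x))\} = \{g(x)\}$, so $W_{q(x)} \cap K$ cannot be empty: otherwise productivity of $h$ would force $g(x) \notin W_{q(x)}$, contradicting $g(x) \in W_{q(x)}$. Hence $g(x) \in K$. If instead $x \notin D$ then $W_{q(x)} = \varnothing$ is disjoint from $K$, so productivity yields $g(x) = h(q(x)) \notin K$ directly. Thus $x \in D \iff g(x) \in K$; together with r.e.-ness of $K$, this places $K$ at the top of the $\Sigma_1^0$ m-degrees.

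The delicate step in this plan is the application of the parametric recursion theorem to produce the self-referential $q$; everything else is bookkeeping. For the computational model of 3-counter Minsky machines used in this paper, the recursion and s-m-n theorems are available via Proposition~\ref{Pr:Turing}, which guarantees that these machines compute exactly the partial recursive functions, so the standard development transfers verbatim and no genuine obstruction arises in adapting the classical argument to the present setting.
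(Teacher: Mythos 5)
The paper does not actually prove this statement: it is quoted as a classical theorem with only the citation to \cite{Myhill1955}, so there is no in-paper proof to compare against. Your argument is the standard textbook proof of Myhill's theorem --- read the function $h$ from the definition of creativity as a productive function for $\overline{K}$, build the indices $f(x,e)$ by s-m-n, close the self-reference with the parametric recursion theorem, and take $g = h \circ q$ as the reduction --- and the two-case verification is correct. The one point you should make explicit is totality of $g$: the paper's definition only guarantees that $h(u)$ is defined when $W_u \cap K = \varnothing$, so for $x \in D$ you must first argue that $h(q(x))$ converges before asserting $W_{q(x)} = \{h(q(x))\}$. This follows from your own construction: if $h(q(x))$ diverged, the dovetailing machine would never emit anything, so $W_{q(x)} = \varnothing$ would be disjoint from $K$ and creativity would force $h(q(x))$ to be defined after all. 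With that sentence added, the equivalence $x \in D \iff g(x) \in K$ goes through, and your closing remark --- that Proposition~\ref{Pr:Turing} makes s-m-n and the recursion theorem available for 3-counter machines --- is precisely what justifies importing the classical argument into this paper's computational model.
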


Using Myhill's theorem, we prove the result we shall need:

\begin{theorem}\label{Th:effective}
 Let $A$, $B$, and $K$ be three r.e.\ sets of natural numbers such that $A$ and $B$ are effectively inseparable, $A \subseteq K$, and $K \cap B = \varnothing$. Then $K$ is $\Sigma_1^0$-complete.
\end{theorem}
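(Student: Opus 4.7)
The plan is to show that $K$ satisfies the definition of a creative set and then invoke Myhill's theorem. Since $K$ is r.e.\ by hypothesis, the remaining task is to produce a total computable function $h$ such that $W_u \cap K = \varnothing$ implies $h(u) \notin W_u \cup K$; this function will be built out of the effective inseparability witness $f$ for the pair $(A, B)$.

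Concretely, I would fix an index $k_0$ with $W_{k_0} = K$ (available because $K$ is r.e.) and, by an application of the s-m-n theorem, obtain a total computable function $g$ such that $W_{g(u)} = W_u \cup B$ for every $u$ (using that $B$ is r.e.). Then set $h(u) = f(k_0, g(u))$. The verification is the core calculation: given $u$ with $W_u \cap K = \varnothing$, the set $W_{k_0} = K$ contains $A$ and $W_{g(u)} = W_u \cup B$ contains $B$, while their intersection equals $(K \cap W_u) \cup (K \cap B)$, which is empty since $K \cap W_u = \varnothing$ by assumption and $K \cap B = \varnothing$ by hypothesis. Effective inseparability then delivers $h(u) \notin W_{k_0} \cup W_{g(u)} = K \cup W_u \cup B$, which in particular yields $h(u) \notin K \cup W_u$. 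Hence $K$ is creative, and Myhill's theorem gives $\Sigma_1^0$-completeness.

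I do not expect a real obstacle here: the construction is a standard reduction from effective inseparability to creativity. The only point requiring mild care is the choice to enlarge $W_u$ to $W_u \cup B$ rather than using $W_u$ or $B$ alone; this enlargement is precisely what lets $W_{g(u)}$ serve as the ``$B$-containing'' half of the inseparability pair while simultaneously guaranteeing that the output of $f$ is forced outside $W_u$, so that both clauses $h(u)\notin K$ and $h(u)\notin W_u$ are obtained in one stroke.
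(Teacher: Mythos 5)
Your proposal is correct and follows essentially the same route as the paper: both proofs show $K$ is creative by fixing an index for $K$, effectively enlarging $W_u$ to $W_u \cup B$ via a computable $g$, and setting $h(u) = f(\cdot, g(u))$ so that effective inseparability places $h(u)$ outside $K \cup W_u \cup B$. The verification of disjointness and the appeal to Myhill's theorem match the paper's argument step for step.
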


\begin{proof}
We show that $K$ is creative, and then use Myhill's theorem. Since $K$ is r.e., $K = W_v$ for some $v$. For any r.e.\ set $W_u$, which is disjoint with $K$, the set $W_u \cup B$ is also r.e.\ and disjoint with $K$. Moreover, this transformation is effective, i.e., there exists a total computable function $g$ such that $W_u \cup B = W_{g(u)}$. 

Let us define $h(u) = f(v,g(u))$, where $f$ is taken from the definition of effective inseparability of $A$ and $B$. 
Since $W_v = K \supseteq A$, $W_{g(u)} = W_u \cup B \supseteq B$, and $W_v \cap W_{g(u)} = (K \cap W_u) \cup (K \cap B) = \varnothing$, we have $h(u) = f(v,g(u)) \notin W_v \cup W_{g(u)} = K \cup W_u \cup B$. Hence, $h(u) \notin W_u \cup K$. 

This means that $K$ is creative, and therefore it is $\Sigma_1^0$-complete by Myhill's theorem.
\end{proof}

Finally, we show effective inseparability of circular behaviour and halting for Minsky machines:
\begin{theorem}\label{Th:CHinsep}
 $\Cc$ and $\Hc$ are effectively inseparable.
\end{theorem}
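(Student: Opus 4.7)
The plan is a standard diagonal construction via the (parameterised) recursion theorem, adapted to $3$-counter machines. Given indices $u,v$ of r.e.\ sets, I would use Proposition~\ref{Pr:Turing} together with Kleene's recursion theorem to produce, computably in $u$ and $v$, a $3$-counter machine $\Mf_{u,v}$ that knows its own code $e=\gn{\Mf_{u,v}}$. On input $0$, $\Mf_{u,v}$ dovetails an enumeration of $W_u$ and of $W_v$ (both are domains of partial computable functions, hence enumerable by $3$-counter machines via Proposition~\ref{Pr:Turing}), waiting for the specific number $m=[e,0]$ to appear. If $m$ turns up in $W_u$ first, $\Mf_{u,v}$ halts; if $m$ turns up in $W_v$ first, $\Mf_{u,v}$ enters a two-state loop that immediately returns to the very same configuration, so it runs circularly; if $m$ never appears, the dovetailing continues forever while an auxiliary counter grows without bound, so $\Mf_{u,v}$ runs infinitely but non-circularly.

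Setting $f(u,v)=[e(u,v),0]$, where $e(u,v)$ is the index of $\Mf_{u,v}$ produced above, yields a total computable function of $u,v$. To verify effective inseparability, I would suppose $W_u \supseteq \Cc$, $W_v \supseteq \Hc$, and $W_u \cap W_v = \varnothing$, and argue by cases. If $f(u,v) \in W_u$, then by construction $\Mf_{u,v}$ halts on $0$, whence $f(u,v) \in \Hc \subseteq W_v$, contradicting disjointness. Symmetrically, if $f(u,v) \in W_v$, then $\Mf_{u,v}$ runs circularly on $0$, whence $f(u,v) \in \Cc \subseteq W_u$, again a contradiction. Hence $f(u,v) \notin W_u \cup W_v$, as required (and in this remaining case the machine indeed runs forever without ever repeating a configuration, which is consistent but not needed for the conclusion).

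The main obstacle is conceptual rather than combinatorial: the ingredients on which I lean---the recursion theorem, the s-m-n theorem, dovetailed enumeration of r.e.\ sets---are customarily stated for Turing machines or partial recursive functions, whereas the theorem demands that the separating function be built out of $3$-counter machines. Proposition~\ref{Pr:Turing} closes this gap, because computability in both models coincides, so every standard effective construction transfers. The three asymptotic behaviours of $\Mf_{u,v}$ are all trivially realisable on three counters: halting is reaching $q_F$; a circular loop is given, for instance, by $\INC(p,\rc,q)$ followed by $\JZDEC(q,\rc,\cdot,p)$, which returns exactly to $\langle p,a,b,c \rangle$ in two steps; and a non-circular infinite run is automatic whenever a counter is unboundedly incremented, which is unavoidable during honest dovetailing. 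All remaining work is routine bookkeeping of the kind already standard in the recursion-theoretic literature.
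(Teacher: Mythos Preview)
Your argument is correct. The paper follows a closely related but more elementary route that avoids invoking the recursion theorem as a black box. Instead of building a machine that knows its own index, the paper constructs a single machine $\Nf$ which, on input $[[u,v],w]$, dovetails enumerations of $W_u$ and $W_v$ searching for $[w,w]$, halting or entering a tight loop accordingly; it then hardcodes $[u,v]$ via an explicit s-m-n statement for 3-counter machines (Lemma~\ref{Lm:hardcode}) to obtain $\Nf_{[u,v]}$, and sets $f(u,v)=[\gn{\Nf_{[u,v]}},\gn{\Nf_{[u,v]}}]$. Self-reference is thus achieved by feeding the machine its own G\"odel number as input rather than by a fixed point. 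Your version packages exactly this diagonal into the parametrised recursion theorem, which makes the write-up shorter; the paper's version is more self-contained about the 3-counter model and does not need to argue that the recursion theorem transfers to that particular numbering. The non-circularity clause you added for the ``neither'' case is, as you yourself observe, irrelevant to the inseparability conclusion and could simply be dropped.
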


In order to prove Theorem~\ref{Th:CHinsep}, we shall need the following technical {\em hardcoding lemma:}

\begin{lemma}\label{Lm:hardcode}
 For any Minsky machine 
 $\Mf$ and any natural number $x$ there exists another Minsky machine $\Mf_x$, such that if $\Mf$ computes $w \mapsto f(w)$ then $\Mf_x$ computes $y \mapsto f([x,y])$. Moreover, the function $x \mapsto \gn{\Mf_x}$ is computable.
\end{lemma}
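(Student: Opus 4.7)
The plan is to build $\Mf_x$ by prepending to $\Mf$ a fixed ``prelude'' subroutine that transforms the input configuration $\langle q_S', y, 0, 0 \rangle$ into $\langle q_S, [x,y], 0, 0 \rangle$, where $q_S$ is the start state of $\Mf$, and then hands control to $\Mf$. Since $\Mf$ started in $\langle q_S, [x,y], 0, 0 \rangle$ computes $f([x,y])$, the composite $\Mf_x$ computes $y \mapsto f([x,y])$, as required.

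First I would give the prelude as an explicit 3-counter program computing $y \mapsto [x,y] = \frac{(x+y)(x+y+1)}{2} + x$, assembled from standard counter-machine gadgets: (i) move $y$ from $\ra$ into $\rb$; (ii) deposit the constant $x$ into $\ra$ via a block of $x$ consecutive $\INC(\cdot, \ra, \cdot)$ instructions linked through fresh states---this is the only $x$-dependent piece of code; (iii) add $\rb$ back to $\ra$, yielding $\ra = x+y$; (iv) compute the triangular number $T_{x+y} = (x+y)(x+y+1)/2$ using a standard 3-counter routine, with $\rc$ as scratch; (v) add $x$ once more and zero the auxiliary registers. The states of $\Mf$ are then renamed to be disjoint from those of the prelude, the exit state of the prelude is identified with $q_S$, and the entry state $q_S'$ of the prelude is declared the start state of $\Mf_x$.

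Computability of $x \mapsto \gn{\Mf_x}$ is immediate, because every piece of $\Mf_x$ is produced by a purely mechanical procedure uniform in $x$: the $x$-dependent blocks in (ii) and (v) consist of $x$ copies of a fixed instruction template with freshly numbered states, and the remaining instructions are a fixed renaming of those of $\Mf$. Thus, given $x$ together with $\gn{\Mf}$, one uniformly emits the instruction list of $\Mf_x$ and hence its encoding.

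The only mildly delicate point is arranging the arithmetic subroutines---addition and especially the multiplication needed for the triangular number---inside a 3-counter framework, where the counter holding $x+y$ must be preserved while the result is being accumulated. This is handled by the usual trick of copying via a scratch cycle that decrements one counter into two others and then restores it; alternatively, and more cleanly, one may appeal directly to Proposition~\ref{Pr:Turing}: the total computable function $y \mapsto [x,y]$ is computed by some 3-counter machine $\Nf_x$ whose code can be produced uniformly in $x$ by the effectiveness of the translation from any standard programming formalism, and $\Mf_x$ is then just $\Nf_x$ followed by $\Mf$.
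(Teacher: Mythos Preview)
Your proposal is correct and follows essentially the same construction as the paper: prepend a prelude that forms $x+y$, feed it to a fixed machine computing $z\mapsto z(z+1)/2$, add $x$ again, then start $\Mf$; the only $x$-dependence is in two blocks of $\INC$'s, whence computability of $x\mapsto\gn{\Mf_x}$. The paper's version is marginally simpler in that it skips your steps (i)--(iii) and just applies $x$ many $\INC$'s directly to $\ra$ (which already holds $y$) to obtain $x+y$, but this is a cosmetic difference.
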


\begin{proof}
 The new machine $\Mf_x$ is constructed as follows. First we apply the necessary number of $\INC$'s in order to transform $y$ to $x+y$. Now we are in some state $q_1$ with $x+y$ in $\ra$. Second, we include a concrete Minsky machine which computes $z \mapsto z(z+1)/2$. Now we are in another state $q_2$ (the final state of this machine) with $(x+y)(x+y+1)/2$ in $\ra$. Now we again apply $\INC$'s to add $x$, yielding $[x,y] = (x+y)(x+y+1)/2 + x$. Finally, we start $\Mf$.
 
 The dependence on $x$ here is simple: just the number of $\INC$'s in two places in the instruction set. This is clearly computable.
\end{proof}

\begin{proof}[Proof of Theorem~\ref{Th:CHinsep}]
The pairing function is bijective, so we can suppose that any natural number is of the form $[[u,v],w]$. Let us construct a computable function $F$, defined on ``triples'' of the form $[[u,v],w]$. This function will have the following properties, provided  
$W_u \cap W_v = \varnothing$. (If this prerequisite does not hold, the behaviour of $F$ can be arbitrary.)
\begin{enumerate}
 \item If $[w,w] \in W_u$, then $F([[u,v],w])$ is defined and equal to 0.
 \item If $[w,w] \in W_v$, then $F([[u,v],w])$ is defined and equal to 1.
\end{enumerate}

The informal description of the algorithm for $F$ is as follows. It tries (using the universal algorithm) to execute Minsky machines with codes $u$ and $v$ in parallel, on the same  input $[w,w]$. If one of them halts, then $[w,w]$ belongs to $W_u$ or $W_v$ respectively, and we yield the corresponding answer. (Otherwise our algorithm runs forever, and $F$ is undefined. Also, if $u$ or $v$ fails to be a valid code of a Minsky machine, we suppose that its ``execution'' also never stops, thus yielding emptiness of $W_u$ or $W_v$ respectively.)

Since $F$ is computable, it is computed by some Minsky machine (Proposition~\ref{Pr:Turing}), with the final state $q_F$. 
Let us extend this Minsky machine with the following instructions:
\[
 \JZDEC(q_F,\ra,q_{F'},p) 
 \qquad\mbox{and}\qquad
 \JZDEC(p,\ra,p,p),
\]
where states $p$ and $q_{F'}$ are new and $q_{F'}$ is the new final state.

Denote the new machine by $\Nf$. Informally, $\Nf$ does the following: if $[w,w] \in W_u$, then $\Nf$ halts on $w$; if $[w,w] \in W_v$, then $\Nf$ runs circularly on $w$ (it gets stuck in $\langle p, 0, 0,0 \rangle$).

By Lemma~\ref{Lm:hardcode} we can hardcode the first component of the input, $[u,v]$, and obtain a computable function $g\colon (u,v) \mapsto \gn{\Nf_{[u,v]}}$. Finally, let 
$f(u,v) = [g(u,v), g(u,v)] = [\gn{\Nf_{[u,v]}}, \gn{\Nf_{[u,v]}}]$.

This $f$ is clearly computable. Now let us show that if $W_u \supseteq \Cc$, $W_v \supseteq \Hc$, and $W_u \cap W_v = \varnothing$, then $f(u,v) \notin W_u \cup W_v$.
Indeed, if $f(u,v) = [\gn{\Nf_{[u,v]}}, \gn{\Nf_{[u,v]}}] \in W_u$, then $\Nf_{[u,v]}$ halts on input $\gn{\Nf_{[u,v]}}$. This means that $[\gn{\Nf_{[u,v]}}, \gn{\Nf_{[u,v]}}] \in \Hc$, but $\Hc$ is a subset of $W_v$ and therefore is disjoint with $W_u$. Dually, if $f(u,v) \in W_v$, then $\Nf_{[u,v]}$ runs circularly on $\gn{\Nf_{[u,v]}}$, that is, 
$[\gn{\Nf_{[u,v]}}, \gn{\Nf_{[u,v]}}] \in \Cc$. Now $\Cc$ is a subset of $W_u$, and therefore is disjoint with $W_v$. Contradiction.

This finishes the construction of the function needed to establish effective inseparability of $\Cc$ and $\Hc$.
\end{proof}

\subsection{$\Sigma_1^0$-Completeness of $\CommACT$}

Now we formally establish a one-way encoding theorem, from circular Minsky computations to derivability in $\CommACT$.

\begin{theorem}\label{Th:circ}
 If $\Mf$ runs circularly on input $x$, then 
  the sequent
 \[
  E^*, q_S, \ra^x \yields D 
  \eqno{(*)}
 \]
is derivable in $\CommACT$.
\end{theorem}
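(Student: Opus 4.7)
The plan is to translate the circular Minsky computation into an explicit $\CommACT$ derivation by a single application of $*L_{\mathrm{ind}}$ to $E^*$, closed off by one $Cut$. Since $\Mf$ is deterministic and runs circularly on $x$, its configuration sequence $\Psi_0 = \langle q_S, x, 0, 0 \rangle, \Psi_1, \Psi_2, \ldots$ satisfies $\Psi_{n+k} = \Psi_n$ for some $n \ge 0$ and $k \ge 1$, so the set $S = \{\Psi_0, \ldots, \Psi_{n+k-1}\}$ is finite. Let $Z \subseteq S \times \{\ra, \rb, \rc\}$ collect those $(\Psi_i, r)$ for which the instruction scheduled at $\Psi_i$ is $\JZDEC(p, r, q_0, q_1)$ with the value of $r$ in $\Psi_i$ being zero. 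For each $\Psi = \langle q, a, b, c \rangle \in S$ write $\psi = q \mul \ra^a \mul \rb^b \mul \rc^c$, and for each $(\Psi, r) \in Z$ let $\beta^{(r)}_\Psi$ denote $z_r$ tensored with the other two counter-blocks of $\Psi$ (e.g.\ $\beta^{(\ra)}_\Psi = \rb^b \mul \rc^c \mul z_\ra$ when $r = \ra$); this is exactly the antecedent that arises on the second $\vee L$ branch when the $\JZDEC$ zero-branch fires at $\Psi$.

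Take the invariant
\[
B \;=\; \bigwedge_{\Psi \in S} (\psi \imp D) \;\wedge\; \bigwedge_{(\Psi, r) \in Z} (\beta^{(r)}_\Psi \imp D)
\]
and derive $E^* \yields B$ by a single application of $*L_{\mathrm{ind}}$. The base premise $\yields B$ decomposes through $\wedge R$ and $\imp R$ into sequents $\psi \yields D$ and $\beta^{(r)}_\Psi \yields D$, each derivable by $\vee R$ on the appropriate disjunct of $D$ (the first disjunct for $\psi$, the disjunct $\rb^* \mul \rc^* \mul z_\ra$ for $\beta^{(\ra)}_\Psi$, and symmetrically for the other two) combined with $\mul L$, $\mul R$, and the easy derivations $\ra^m \yields \ra^*$ obtained from $*R_0$ and $*R_{\mathrm{ind}}$.

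The inductive premise $E, B \yields B$ is again split by $\wedge R$ into one sub-goal per conjunct of $B$. For a main-thread sub-goal $E, B, \psi_i \yields D$, I apply $\mul L$ to decompose $\psi_i$, then several $\wedge L$ steps on $E$ to isolate the instruction formula $A_{I_{i+1}}$ (and, for $\JZDEC$, the appropriate conjunct of it according to whether the tested counter in $\Psi_i$ is zero or positive). One $\imp L$ with the natural splitting of the context discharges its trivial left premise $p \yields p$ or $p, r \yields p \mul r$, and its right premise reduces, after $\mul L$ (or $\vee L$ in the $\JZDEC$ zero-case), to $B, \psi_{i+1} \yields D$; this branch is closed by extracting $\psi_{i+1} \imp D$ from $B$ and one final $\imp L$ whose left premise $\Psi_{i+1} \yields \psi_{i+1}$ is immediate by $\mul R$. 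In the $\JZDEC$ zero-case the $\vee L$ also produces a second sub-branch of the form $B, \beta^{(r)}_{\Psi_i} \yields D$, closed by the matching $\beta^{(r)}_{\Psi_i} \imp D$ conjunct. For a zero-thread sub-goal $E, B, \beta^{(r)}_\Psi \yields D$, I pick $N_r = z_r \imp z_r$ from $E$ by $\wedge L$; its $\imp L$ has trivial left premise $z_r \yields z_r$, and its right premise is $B, \beta^{(r)}_\Psi \yields D$, closed again by $\beta^{(r)}_\Psi \imp D$. Finally one $Cut$ on $B$ combines $E^* \yields B$ with $B, q_S, \ra^x \yields D$, obtained by extracting $\psi_0 \imp D$ from $B$ and applying $\imp L$; this produces~$(*)$.

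The main obstacle is conceptual rather than computational: $*L_{\mathrm{ind}}$ consumes only one copy of $E$ per inductive step, whereas in $\CommACTinfty$ the $\JZDEC$ zero-branch would open a sub-derivation that keeps unfolding $E^*$ via $N_r$ dummies indefinitely. Bundling every reachable main-thread configuration together with every zero-thread spawn into one conjunctive invariant $B$ sidesteps this: the $\beta^{(r)}_\Psi \imp D$ conjuncts make each spawned zero-thread immediately absorbable into $D$ inside $B$, so the dummy $N_r$ from $E$ has to bridge only a single induction step before $B$ itself reappears, and the whole infinite structure of the circular $\CommACTinfty$ proof collapses into one inductive invariant.
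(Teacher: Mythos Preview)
Your argument is correct and follows a cleaner route than the paper's. The paper proceeds in stages: it first derives the zero-checking sequents $E^*, z_r, \ldots \yields D$ by their own separate application of $*L_{\mathrm{ind}}$; then it models each computation step as a reduction from $E^*, \Psi_i \yields D$ to $E^*, \Psi_{i+1} \yields D$ via the derived $\CommACTinfty$-rule $*L$; finally it translates the cyclic part by passing through the Kleene-algebra identity $E^* \yields (\bigvee_{i<k} E^i)\mul (E^k)^*$ and applying $*L_{\mathrm{ind}}$ to $(E^k)^*$ with the single-configuration invariant $(\bigvee_{i<k} E^i)\imp({\bullet\Gamma}\imp D)$, where $\Gamma$ is the repeated configuration and $k$ the period. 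The pre-periodic tail is threaded on afterwards using the derived $*L$ again.

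You instead bundle \emph{all} reachable configurations and all spawned zero-threads into one finite conjunctive invariant $B$ and apply $*L_{\mathrm{ind}}$ to $E^*$ directly. This dispenses with the auxiliary Kleene identity, the separate treatment of zero-checking, and the staged handling of the pre-period; the cost is a larger invariant, which is harmless since $S$ and $Z$ are finite. The key insight you isolate in your last paragraph---that the conjuncts $\beta^{(r)}_\Psi \imp D$ let each spawned zero-thread be absorbed after one dummy $N_r$ step, so the inductive premise $E,B\yields B$ never needs a second copy of $E$---is precisely what makes the single-star approach work where a naive translation of the $\CommACTinfty$ proof would not.
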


\begin{proof}
 We start with deriving the ``zero-checking'' sequents $E^*, z_\ra, \rb^b, \rc^c \yields D$, and ditto for $\rb$ and $\rc$. The circular derivation is as follows:
 \[\small
  \infer[*L]
  {E^*, z_\ra, \rb^b, \rc^c \yields D\tikzmark{BX}}
  {\infer=[\vee R]
  {z_\ra, \rb^b, \rc^c \yields D}
  {z_\ra, \rb^b, \rc^c \yields \rb^* \mul \rc^* \mul z_\ra}& 
  \infer=[\wedge L]
  {E^*, E, z_\ra, \rb^b, \rc^c \yields D}
  {\infer[\imp L]{E^*, z_\ra \imp z_\ra, z_\ra, \rb^b, \rc^c \yields D}
  {z_\ra \yields z_\ra & 
  E^*, z_\ra, \rb^b, \rc^c \yields D\tikzmark{AX}}}}
  \begin{tikzpicture}[overlay, remember picture, >=latex, distance=-3.0cm]
 \draw[->,in=160,out=185] ($(pic cs:AX)+(.1,.1)$) to
 ($(pic cs:BX)+(.05,0)$);
\end{tikzpicture}
 \]
 \vskip .3em\noindent
and this is how it gets translated into the calculus for $\CommACT$ presented in Section~\ref{S:upper}:
\[\small
 \infer[\imp R_{\mathrm{inv}}]
 {E^*, z_\ra, \rb^b, \rc^c \yields D}
 {\infer[*L_{\mathrm{ind}}]
 {E^* \yields (z_\ra \mul \rb^b \mul \rc^c) \imp D}
 {\infer=[\mul L, \imp R]
 {\yields (z_\ra \mul \rb^b \mul \rc^c) \imp D}
 {\infer=[\vee R]{z_\ra, \rb^b, \rc^c \imp D}
 {z_\ra, \rb^b, \rc^c \yields \rb^* \mul \ra^* \mul z_\ra}}
 & 
 \infer=[\mul L, \imp R]
 {E, (z_\ra \mul \rb^b \mul \rc^c) \imp D \yields (z_\ra \mul \rb^b \mul \rc^c) \imp D}
 {\infer=[\wedge L]{E, z_\ra, \rb^b, \rc^c, (z_\ra \mul \rb^b \mul \rc^c) \imp D \yields D}{\infer[\imp L]{z_\ra \imp z_\ra, z_\ra, \rb^b, \rc^c, (z_\ra \mul \rb^b \mul \rc^c) \imp D \yields D}{\infer[\imp L]{z_\ra \imp z_\ra, z_\ra, \rb^b, \rc^c \yields z_\ra \mul \rb^b \mul \rc^c}{z_\ra \yields z_\ra & z_\ra, \rb^b, \rc^c \yields z_\ra \mul \rb^b \mul \rc^c} & D \yields D}}}}}
\]
(Here and further $\imp R_{\mathrm{inv}}$ is the inversion of a series of $\imp R$ applications, which is established by cut with $z_\ra, \rb^b, \rc^c,
(z_\ra \mul \rb^b \mul \rc^c) \imp D \yields D$.)

Next, we produce a circular derivation of $(*)$ in the following way. Each step of $\Mf$'s execution, $\langle p,a,b,c \rangle \to \langle q,a',b',c' \rangle$, can be represented as a subderivation with $E^*, p, \ra^a, \rb^b, \rc^c \yields D$ as the goal and $E^*, q, \ra^{a'}, \rb^{b'}, \rc^{c'} \yields D$ as a hypothesis. Moreover, the lowermost rule in this derivation is $*L$, which makes the correctness condition valid.

The construction is basically the same as in the proof of Lemma~\ref{Lm:omega}. For $\INC(p,\ra,q)$ we have
\[\small
 \infer[*L]
 {E^*, p, \ra^a, \rb^c, \rc^c \yields D}
 {p, \ra^a, \rb^b, \rc^c \yields D & 
 \infer=[\wedge L]
 {E^*, E, p, \ra^a, \rb^b, \rc^c \yields D}
 {\infer[\imp L]{E^*, p \imp (q \mul \ra), p, \ra^a, \rb^b, \rc^c \yields D}
 {p \yields p & \infer[\mul L]{E^*, q \mul \ra, \ra^a, \rb^b, \rc^c \yields D}
 {E^*, q, \ra^{a+1}, \rb^b, \rc^c \yields D}}}}
\]
For $\JZDEC(p,\ra,q_0,q_1)$ and $a \ne 0$,
\[\small
 \infer[*L]
 {E^*, p, \ra^a, \rb^c, \rc^c \yields D}
 {p, \ra^a, \rb^b, \rc^c \yields D & 
 \infer=[\wedge L]
 {E^*, E, p, \ra^a, \rb^b, \rc^c \yields D}
 {\infer[\imp L]{E^*, (p \mul \ra) \imp q_1, p, \ra^a, \rb^b, \rc^c \yields D}
 {\infer[\mul R]{p, \ra \yields p \mul \ra}{p \yields p & \ra \yields \ra} & 
 E^*, q_1, \ra^{a-1}, \rb^b, \rc^c \yields D
 }}}
\]
Finally, for $\JZDEC(p,\ra,q_0,q_1)$ and $a = 0$ we have
\[\small
 \infer[*L]
 {E^*, p, \rb^c, \rc^c \yields D}
 {p, \rb^b, \rc^c \yields D & 
 \infer=[\wedge L]
 {E^*, E, p, \rb^c, \rc^c \yields D}
 {\infer[\imp L]{E^*, p \imp (q_0 \vee z_\ra), p, \rb^b, \rc^c \yields D}
 {p \yields p & 
 \infer[\vee L]{E^*, q_0 \vee z_\ra, \rb^b, \rc^c \yields D}{E^*, q_0, \rb^b, \rc^c \yields D & E^*, z_\ra, \rb^b, \rc^c \yields D}}}
 }
\]
Here $\ra^a, \rb^b, \rc^c, p \yields D$ (in particular, $\rb^b, \rc^c, p \yields D$) is easily derivable using $*R$, $\mul R$, and $\vee R$, and derivability of $E^*, z_\ra, \rb^b, \rc^c \yields D$ in $\CommACT$ was established earlier.

Next, we connect these subderivations in order to represent the infinite run of $\Mf$. Since this run is circular, a sequent of the form $E^*, p, \ra^a, \rb^b, \rc^c \yields D$ gets repeated, and we arrive at a circular proof:
\[\small
 \infer[*L]
 {E^*, q_S \yields D}
 {q_S \yields D & 
 \infer{E^*, E, q_S \yields D}
 {\infer{\raisebox{4pt}{\vdots}}
 {\infer[*L]{E^*, p, \ra^a, \rb^b, \rc^c \yields D\tikzmark{BZ}}
 {p, \ra^a, \rb^b, \rc^c \yields D & 
 \infer{E^*, E, p, \ra^a, \rb^b, \rc^c \yields D}{\infer{\raisebox{4pt}{\vdots}}
 {E^*,p, \ra^a, \rb^b, \rc^c \yields D\tikzmark{AZ}%
\begin{tikzpicture}[overlay, remember picture, >=latex, distance=-3.0cm]
 \draw[->,in=160,out=160] ($(pic cs:AZ)+(.1,.1)$) to
 ($(pic cs:BZ)+(.1,.1)$);
\end{tikzpicture}
 }}}}}}
\]

Now we translate this circular proof into $\CommACT$.
Let us first consider the upper part of this proof, 
\[\small\infer[*L]{E^*, p, \ra^a, \rb^b, \rc^c \yields D\tikzmark{BZZ}}
 {p, \ra^a, \rb^b, \rc^c \yields D & 
 \infer{E^*, E, p, \ra^a, \rb^b, \rc^c \yields D}{\infer{\raisebox{4pt}{\vdots}}
 {E^*,p, \ra^a, \rb^b, \rc^c \yields D\tikzmark{AZZ}%
\begin{tikzpicture}[overlay, remember picture, >=latex, distance=-3.0cm]
 \draw[->,in=160,out=160] ($(pic cs:AZZ)+(.1,.1)$) to
 ($(pic cs:BZZ)+(.1,.1)$);
\end{tikzpicture}
 }}} 
\]
and translate it into a proof in $\CommACT$ using $*L_{\mathrm{ind}}$. 

Let $k$ be the number of $*L$ applications on the main branch; $k \ge 1$. First, from this circular derivation we can extract derivations of $E^i, p, \ra^a, \rb^b, \rc^c \yields D$ for $0 \le i < k$. Indeed, we replace $E^*$ with $E^i$ in the goal and then proceed upwards, choosing the right branch of the first $i$ applications of $*L$ (at each step $i$ gets decreased by 1) and the left branch at the $(i+1)$-st one.

Second, we derive $E^k, (p \mul \ra^a \mul \rb^b \mul \rc^c) \imp D, p, \ra^a, \rb^b, \rc^c \yields D$. Here we replace $E^*$ with $E^k, (p \mul \ra^a \mul \rb^b \mul \rc^c) \imp D$ in the goal sequent, and then always choose the right branch, and in the end $k$ reduces to 0, and we enjoy a derivable sequent
$(p \mul \ra^a \mul \rb^b \mul \rc^c) \imp D, p, \ra^a, \rb^b, \rc^c \yields D$ instead of the backlinked $E^*, p, \ra^a, \rb^b, \rc^c \yields D$.

Now we glue everything up. The desired sequent $E^*, p, \ra^a, \rb^b, \rc^c \yields D$ is obtained by $Cut$ from $E^* \yields (\bigvee_{i=0}^{k-1} E^i) \mul (E^k)^*$ and
$(\bigvee_{i=0}^{k-1} E^i) \mul (E^k)^*, p, \ra^a, \rb^b, \rc^c \yields D$. The former is a well-known principle of Kleene algebra, which is derivable in $\ACT$ and therefore in $\CommACT$. The derivation of the latter is presented below, where, for brevity, $\Gamma = p, \ra^a, \rb^b, \rc^c$, $\bullet\Gamma = p \mul \ra^a \mul \rb^b \mul \rc^c$:
\[\small
\infer[\mul L]{
(\bigvee_{i=0}^{k-1} E^i) \mul (E^k)^*, \Gamma \yields D}
{\infer[\imp R_{\mathrm{inv}}]
{\bigvee_{i=0}^{k-1} E^i,  (E^k)^*, \Gamma \yields D}
{\infer[*L_\mathrm{ind}]{(E^k)^* \yields (\bigvee_{i=0}^{k-1} E^i) \imp ({\bullet\Gamma} \imp D)}
{
\infer=[\mul L,\imp R]
{\yields (\bigvee_{i=0}^{k-1} E^i) \imp ({\bullet\Gamma} \imp D)}
{\infer=[\vee L]{\bigvee_{i=0}^{k-1} E^i, \Gamma \yields D}
{\ldots & E^i, \Gamma \yields D & \ldots}}
&
\infer=[\mul L, \imp R]
{E^k, (\bigvee_{i=0}^{k-1} E^i) \imp ({\bullet\Gamma} \imp D) \yields 
(\bigvee_{i=0}^{k-1} E^i) \imp ({\bullet \Gamma} \imp D)}
{\infer[\imp L]{E^k, \Gamma, \bigvee_{i=0}^{k-1} E^i, (\bigvee_{i=0}^{k-1} E^i) \imp ({\bullet\Gamma} \imp D) \yields D}
{\bigvee_{i=0}^{k-1} E^i \yields \bigvee_{i=0}^{k-1} E^i & 
E^k, \Gamma, {\bullet\Gamma} \imp D \yields D}}}}}
\]

Now we have a finite, non-circular derivation of our main goal $E^*, q_S \yields D$ in a combined system, which includes both axioms and rules of $\CommACT$ (as defined in Section~\ref{S:upper}) and the $*L$ rule of $\CommACTinfty$. We finish our argument by showing that $*L$ is derivable in $\CommACT$:
\[\small
 \infer[Cut]
 {\Gamma, A^* \yields C}
 {A^* \yields \U \vee (A \mul A^*) &
 \infer[\vee L]{\Gamma, \U \vee (A \mul A^*) \yields C}
 {\infer[\U L]{\Gamma, \U \yields C}{\Gamma \yields C} & 
 \infer[\mul L]{\Gamma, A \mul A^* \yields C}{\Gamma, A^*, A \yields C}}}
\]
Here $A^*  \yields \U \vee (A \mul A^*)$ is again a principle of Kleene algebra, which is derivable in $\ACT$ and therefore in $\CommACT$.
\end{proof}

Now we proceed exactly as in the non-commutative case~\cite{Kuznetsov2021TOCL}. Let 
\begin{align*}
 & \Kc(\CommACTomega) = \{ \Mf \mid \mbox{$(*)$ is derivable in $\CommACTomega$} \};\\
 & \Kc(\CommACT) = \{ \Mf \mid \mbox{$(*)$ is derivable in $\CommACT$} \}
\end{align*}
and recall that
\begin{align*}
 & \Cc = \{ \Mf \mid \mbox{$\Mf$ runs circularly} \}; \\
 & \overline{\Hc} = \{ \Mf \mid \mbox{$\Mf$ does not halt} \}. \\
\end{align*}
By Theorem~\ref{Th:omega} and Theorem~\ref{Th:circ} we have
\[
 \Cc \subset \Kc(\CommACT) \subset \Kc(\CommACTomega) = \overline{\Hc}.
\]
Now Theorem~\ref{Th:CHinsep} and  Theorem~\ref{Th:effective} immediately yield $\Sigma_1^0$-completeness of $\Kc(\CommACT)$ and, therefore, of $\CommACT$ itself:

\begin{theorem}
 The derivability problem for $\CommACT$ is $\Sigma_1^0$-complete.
\end{theorem}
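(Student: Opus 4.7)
The plan is to combine the three main ingredients already in hand with the effective inseparability machinery of Section~\ref{S:ACT}, so as to wedge $\Kc(\CommACT)$ between $\Cc$ and $\overline{\Hc}$ and then apply Theorem~\ref{Th:effective}.

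First I would dispose of the $\Sigma_1^0$ upper bound. Since $\CommACT$ was axiomatized in Section~\ref{S:upper} by a finite calculus with finite derivations, the set of its derivable sequents is recursively enumerable by straightforward proof search; composing with the computable map $\langle \Mf, x \rangle \mapsto (*)$ then shows $\Kc(\CommACT) \in \Sigma_1^0$, and the same applies to the full derivability relation of $\CommACT$.

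For the lower bound I would establish the sandwich $\Cc \subseteq \Kc(\CommACT) \subseteq \overline{\Hc}$. The left inclusion is exactly Theorem~\ref{Th:circ}. For the right one I would first note that every axiom and rule of the finitary calculus for $\CommACT$ is sound in every commutative *-continuous action lattice: the inductive rules $*R_0$, $*R_{\mathrm{ind}}$, and $*L_{\mathrm{ind}}$ all follow from the defining property of $a^*$ as the least prefixed point of $x \mapsto \U \vee a \mul x$, which is available in any action lattice. Hence $\Kc(\CommACT) \subseteq \Kc(\CommACTomega)$, and Theorem~\ref{Th:omega} identifies the latter with $\overline{\Hc}$.

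With the sandwich in place I would apply Theorem~\ref{Th:effective} with $A = \Cc$, $B = \Hc$, $K = \Kc(\CommACT)$: effective inseparability of $A$ and $B$ is Theorem~\ref{Th:CHinsep}; the sets $\Cc$ and $\Hc$ are recursively enumerable as the standard halting-type predicates for deterministic 3-counter machines; $\Kc(\CommACT)$ is r.e.\ by the upper bound step; and the hypotheses $A \subseteq K$ and $K \cap B = \varnothing$ are exactly the sandwich. Theorem~\ref{Th:effective} then yields $\Sigma_1^0$-completeness of $\Kc(\CommACT)$, and the computable map $\langle \Mf, x \rangle \mapsto (*)$ is itself an m-reduction of $\Kc(\CommACT)$ into the derivability problem for $\CommACT$, transferring $\Sigma_1^0$-hardness to the latter. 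I do not expect any real obstacle at this stage: all conceptual difficulty has been absorbed into Theorem~\ref{Th:circ} (encoding circular Minsky runs by circular $\CommACTinfty$ proofs and unfolding them via $*L_{\mathrm{ind}}$) and into Theorem~\ref{Th:CHinsep} (effective inseparability of $\Cc$ and $\Hc$), so what remains here is bookkeeping.
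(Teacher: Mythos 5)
Your proposal is correct and follows essentially the same route as the paper: the sandwich $\Cc \subseteq \Kc(\CommACT) \subseteq \Kc(\CommACTomega) = \overline{\Hc}$ (via Theorems~\ref{Th:circ} and~\ref{Th:omega}), followed by Theorem~\ref{Th:CHinsep} and Theorem~\ref{Th:effective}, with the trivial $\Sigma_1^0$ upper bound from finitary proof search. The only difference is that you spell out the soundness argument for the inclusion $\Kc(\CommACT) \subseteq \Kc(\CommACTomega)$, which the paper leaves implicit.
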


\section{Conclusion}\label{S:conclusion}
In this article, we have established $\Pi_1^0$-completeness of $\CommACTinfty$ and $\Sigma_1^0$-completeness of $\CommACT$. The former is a commutative counterpart of results by Buszkowski and Palka~\cite{Buszkowski2007,Palka2007}. The latter is a commutative counterpart of an earlier result by the author~\cite{Kuznetsov2019LICS,Kuznetsov2021TOCL}. 

In fact, as in the non-commutative case, we have established $\Sigma_1^0$-completeness not only for $\CommACT$, but for a range of logics: namely, any r.e.\ logic between $\CommACT$ and $\CommACTomega$ is $\Sigma_1^0$-complete.

There are several questions are left for further research:
\begin{enumerate}
 \item The complexity question for $\CommACT$ without additive connectives ($\wedge$ and $\vee$) is open. Notice that additives are crucial for encoding the $\JZDEC$ instruction; in~\cite{LMSS}, there is no $\JZDEC$, but there are parallel computations, also simulated using $\vee$.
\item It is an open question whether the same complexity results hold for the variants of $\CommACTomega$ and $\CommACT$ with distributivity of $\vee$ over $\wedge$ added.
\item The complexity of the Horn theory for commutative action lattices or even commutative Kleene algebras is, to the best of the author's knowledge, unknown. Comparing with Kozen's result for non-commutative Kleene algebras~\cite{Kozen2002}, we conjecture $\Pi_1^1$-completeness for the *-continuous case, and the proof should again use Minsky machines instead of Turing ones.
\item It is also interesting to look at the non-associative, but commutative, version of infinitary action logic. In the non-associative case, it is problematic to define iteration, and it gets replaced with so-called {\em iterative division,} that is, compound connectives of the form $A^* \imp B$ and $B \pmi A^*$. The interesting phenomenon here is that the corresponding non-commutative system happens to be algorithmically decidable, at least with the distributivity axiom added~\cite{Sedlar2020}. On the other hand, as shown in~\cite{KuznetsovRyzhkova2020}, in the associative and non-commutative case iterative division is sufficient for $\Pi_1^0$-hardness.
\end{enumerate}

\subsection*{Acknowledgments}

The author is grateful to the participants of the DaL\'{\i} 2020 online meeting for fruitful discussions, especially on directions of further research. 

\paragraph*{Financial Support.} The work was supported by the Russian Science Foundation, in cooperation with the Austrian Science Fund, under grant RSF–FWF 20-41-05002.

\bibliographystyle{abbrv}
\bibliography{ACT}

\end{document}